\newtheorem{thm}{Theorem}[section]
\newtheorem{prop}[thm]{Proposition}
\newtheorem{lemma}[thm]{Lemma}
\theoremstyle{definition}
\theoremstyle{remark}
\newtheorem{remark}[thm]{Remark}
\newcommand{\R}{{\mathbb R}}
\newcommand{\N}{{\mathbb N}}
\newcommand{\Z}{{\mathbb Z}}
\newcommand{\C}{{\mathbb C}}
\newcommand{\M}{{\mathcal M}}
\newcommand{\n}{{\mathcal N}}
\newcommand{\bs}{\begin{split}}
\newcommand{\es}{\end{split}}
\newcommand{\tr}{\text{tr}}
\newcommand{\norm}[2]{\|#1\|_{#2}}
\newcommand{\id}{\text{id}}
\newcommand{\la}{\mathcal{L}}
\newcommand{\ten}{\otimes}
\begin{document}
\setcounter{page}{1}	
\title[Noncommutative spherical means over cyclic groups]
{Dimension-free maximal inequalities for noncommutative spherical means over cyclic groups}	



\author[L. Gao]{Li Gao}
\address{
	School of Mathematics and Statistics\\
	Wuhan University\\
	Wuhan 430072\\
	China}

\email{gao.li@whu.edu.cn}
\author[B. Xu]{Bang Xu}
\address{
	School of Mathematical Sciences\\
	Xiamen University\\
	Xiamen 361005\\
	China}

\email{bangxu@xmu.edu.cn}


\subjclass[2020]{46L51, 42B20}
\keywords{Dimension-free bound, Noncommutative maximal inequalities, Noise operators, Noncommutative $L_p$-spaces}

\begin{abstract}
In this paper, we establish dimension-free $L_p$-estimates for operator-valued maximal spherical means over cyclic groups $\Z_{m+1}^d$ for all $p>1$ and $m\geq1$. The key ingredient is a noncommutative extension of the spectral technique developed by Nevo and Stein. As an application, we obtain a noncommutative spherical maximal inequality for automorphism actions on von Neumann algebras, along with several concrete examples.
%
\end{abstract}

\maketitle

\section{Introduction}

Let $B$ be a symmetric convex body in $d$-dimensional Euclidean space $\R^d$. For a locally integrable function $f$, the Hardy-Littlewood maximal function is defined by
$$M_Bf(v)=\sup_{t>0}\frac{1}{|B|}\int_B|f(v+tu)|du,$$
where $|B|$ denotes the volume of $B$. For $1<p<\infty$, let $C_p(B)$ be the optimal
constant such that
\begin{equation*}\label{euclidean}
	\|M_Bf\|_p\leq C_p(B)\|f\|_p,\ \forall f\in L_p(\R^d);
\end{equation*}
while $p=1$, $C_1(B)$ is taken to satisfy the weak type $(1,1)$ inequality,
\begin{equation*}\label{euclidean1}
	\|M_Bf\|_{1,\infty}\leq C_1(B)\|f\|_1, \ \forall f\in L_1(\R^d).
\end{equation*}
By applying the theory of spherical maximal functions, Stein \cite{St1} proved that  for  the $d$-dimensional Euclidean ball $B=B^d_2$ and for all $p>1$, the constant $C_p(B^d_2)$ is bounded and independent of the dimension $d$. For $p=1$, Stein and Str\"omberg \cite{SS} later established the bound $C_1(B)\lesssim d\log d$. Subsequent work by Bourgain \cite{Bou86} and Carbery \cite{Car86} extended these results, proving dimension-free bounds for $C_p(B)$ for general symmetric convex $B$ for $p>\frac{3}{2}$.
For specific classes of symmetric convex bodies, such as the $d$-dimensional $\ell_q$-ball $B_q^d$,  M\"uller \cite{Mu90} proved the dimension-free bounds for $C_p(B_q^d)$ for all $p>1$ and $1\leq q<\infty$. In the case of the cube $B_\infty^d=[-1,1]^d$, Bourgain \cite{Bou14} showed that the dimension-free bounds of $C_p(B_\infty^d)$ hold for all $1<p<\infty$. Furthermore, Aldaz \cite{Al11} proved that $C_1(B_\infty^d)$ grows to infinity as $d\to \infty$.

 In recent years, dimension-free estimates have been extensively studied for discrete Hardy-Littlewood maximal operators. For $m\geq1$, let $\Z^d_{m+1}$ denote the $d$-dimensional cyclic group equipped with the $\ell_0$ metric (Hamming distance),
$$|u|=|\{1\leq i\leq d:u(i)\neq0\}|$$
for $u=(u(1),\cdot\cdot\cdot,u(d))\in\Z^d_{m+1}$.
For each $1\leq k\leq d$, let
$$\sigma_k:=\frac{1}{|\{|u|=k\}|}\chi_{\{|u|=k\}}=\frac{1}{\binom{d}{k}m^k}\chi_{\{|u|=k\}}$$
denote the $L_1$-normalized indicator function of the $k$-sphere. The associated spherical maximal operator $M$ is given by
$$M(f)(s)=\sup_{1\leq k\leq d}|\sigma_k\ast f(s)|, s\in \Z^d_{m+1}$$
for $f:\Z^d_{m+1}\rightarrow\C$, where the convolution is defined as
$$\sigma_k\ast f(s)=\sum_{u\in\Z^d_{m+1}}\sigma_k(u)f(s-u).$$
The dimension-free boundedness of the maximal operator $M$ was first investigated by Harrow, Kolla and Schulman \cite{HKS}, who established its dimension-free $L_2$-boundedness on the hypercube $\Z^d_{2}$. Krause \cite{K1} later extended this result to all $p>1$. Subsequently, the dimension-free $L_p$-boundedness of $M$ was obtained for all cyclic groups $\Z^d_{m+1}$ with $m\geq1$ in \cite{GKK}. More precisely, Greenblatt, Kolla and Krause \cite{GKK} proved that
	for any $p>1$ and any $m\geq1$, there is a constant $C_{p,m}$ depends only on $p$ and $m$ such that
	\begin{equation}\label{euclidean3}
		\|M(f)\|_{L_p(\Z_{m+1}^d)}\leq C_{p,m}\|f\|_{L_p(\Z_{m+1}^d)},\ \forall f\in L_p(\Z_{m+1}^d).
	\end{equation}
For recent advances in dimension-free estimates for  $M$ on $\Z^d$, we refer the readers to  \cite{NW25} and the references therein.

In the area of noncommutative analysis, remarkable progress has been made in terms of maximal inequalities, for applications in noncommtative ergodic theory and noncommutative harmonic analysis. Notable examples include the noncommutative versions of maximal ergodic inequalities \cite{CHW,HLW,JX}, Hardy-Littlewood maximal inequalities \cite{M}, Stein's spherical maximal inequalities \cite{CH,Hong,LLLW} etc. Despite the significant advances, the investigation of dimension-free bounds for noncommutative maximal inequalities remains largely unexplored, with only limited results in the literature.

The purpose of this paper is to establish the dimension-free $L_p$-bound (\ref{euclidean3}) to the operator-valued setting, which then implies the corresponding noncommutative maximal inequalities. Let $\mathcal{M}$ be a von Neumann algebra equipped with a normal semifinite faithful trace $\tau$.  The noncommutative $L_p$-spaces associated to the pair $(\M,\tau)$ is denoted by  $L_p(\M)$. For each integer $k$, define the convolution operator
\begin{equation}\label{discrete2}
	T_kf(s)=\sigma_k\ast f(s)=\frac{1}{\binom{d}{k}m^k}\sum_{u:|u|=k}f(s-u),
\end{equation}
for operator-valued functions $f:\Z^d_{m+1}\rightarrow \mathcal{M}$. 
Let $\mathcal{N}= L_\infty(\Z^d_{m+1},\M)\cong L_\infty(\Z^d_{m+1})\overline{\otimes}\M $ be the von Neumann algebra
tensor product equipped with the tensor trace $\varphi=\mu\otimes\tau$, where $\mu$ is the Haar measure on $\Z^d_{m+1}$. Note that for $0<p<\infty$,  $L_p(\mathcal{N})$ can be identified as the Bochner $L_p$-space $L_p(\Z^d_{m+1};L_p(\M))$.
The following is our first main result and we refer to the next section for the precise definition of $L_p(\M;\ell_\infty)$.
\begin{thm}\label{main2}
Let $(T_k)_{1\leq k\leq d}$ be the convolution operators defined as (\ref{discrete2}).	Then for any $p>1$ and any operator-valued functions $f\in L_p(\mathcal{N})$, 
	$$\|(T_kf)_{1\leq k\leq d}\|_{L_p(\mathcal{N};\ell_\infty)}\leq C_{p,m}\|f\|_{p},$$
where constant $C_{p,m}$ depends only on $p$ and $m$ (independent of $d$).
\end{thm}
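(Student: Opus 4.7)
The plan is to transport the Nevo-Stein spectral technique---as adapted to cyclic groups by Harrow-Kolla-Schulman and Greenblatt-Kolla-Krause---from the commutative to the operator-valued setting. The crucial substitutes are the Junge-Xu noncommutative maximal ergodic theorem for symmetric Markov semigroups and a dimension-free operator-valued square function estimate, both producing constants independent of $d$.

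I would begin with the spectral framework: because $\Z_{m+1}^d$ has product structure, the convolutions $T_k$ and the noise semigroup $R_\rho = \sum_{j=0}^d \rho^j P_j$---where $P_j$ projects onto characters of Hamming weight $j$---are simultaneously diagonalized, the eigenvalue of $T_k$ on the weight-$j$ subspace being a normalized Krawtchouk polynomial $\lambda_{k,j}$. After tensoring with $\mathrm{id}_\cM$, the family $(R_\rho\otimes \mathrm{id}_\cM)_\rho$ remains a symmetric completely positive Markov semigroup on $\cN$, so Junge-Xu gives $\|(R_\rho f)_{\rho\in[0,1]}\|_{L_p(\cN;\ell_\infty)}\lesssim_p \|f\|_p$ for all $p>1$ with dimension-free constants.

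Following Nevo-Stein, I would split $T_k = A_k+E_k$ at a cutoff weight $j_0(k)$. For $j\le j_0(k)$, the Krawtchouk asymptotic $\lambda_{k,j}\approx \rho(k)^j$ (with $\rho(k)$ determined by $k$ and $m$) allows $A_k = \sum_{j\le j_0(k)}\lambda_{k,j}P_j$ to be represented as, or majorized by, a convex combination of $R_\rho$'s with $\rho$ concentrated near $\rho(k)$, reducing the maximal inequality for $(A_k)_k$ to that of $(R_\rho)_\rho$. The high-weight remainder $E_k$ is handled by a square function: orthogonality of the $P_j\otimes \mathrm{id}_\cM$ in $L_2(\cN)$ together with Krawtchouk decay for $j>j_0(k)$ yields $\|(\sum_k |E_k f|^2)^{1/2}\|_{L_2(\cN)}\lesssim_m \|f\|_2$, which gives an $L_2(\cN;\ell_\infty)$-bound via a standard column/row square function embedding, and interpolation with a crude bound for $p$ large completes the range $p>1$.

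The main obstacle is producing a genuinely dimension-free noncommutative square function estimate at $p\neq 2$. Orthogonality and Parseval handle the $L_2$-case cleanly, because each $E_k$ acts trivially on the $\cM$-factor and the scalar Krawtchouk bounds of Greenblatt-Kolla-Krause lift coefficient-wise. Passing to other $p$ requires noncommutative Khintchine/Rosenthal-type inequalities, applied through complex interpolation, while carefully ensuring that the tensorial compatibility between the spectral projections $P_j$ and the von Neumann algebra $\cN = L_\infty(\Z_{m+1}^d)\bar\otimes\cM$ does not introduce hidden $d$-dependence in the constants.
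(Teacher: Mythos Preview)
Your outline departs from the paper's route at the decisive point, and the gap it leaves is the range $1<p<2$. The paper does \emph{not} split $T_k$ spectrally as $A_k+E_k$; instead it introduces the analytic family of complex Ces\`aro means $M_n^\alpha f=(n+1)^{-\alpha-1}\sum_{k\le n}A_{n-k}^\alpha T_kf$ (so that $T_n=M_n^{-1}$) and runs Stein's complex interpolation in the parameter $\alpha$. For $\mathrm{Re}\,\alpha\ge 0$ these are dominated by the ordinary Ces\`aro averages $T_K^M=\frac{1}{K+1}\sum_{k\le K}T_k$, whose \emph{positive} kernels are pointwise majorized by averages of the noise measures; this is where the kernel comparison lives, not at the level of frequency-truncated pieces, and it gives $L_q(\cN;\ell_\infty)$ bounds for \emph{every} $q>1$ via Junge--Xu. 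For $\alpha=-t$ a negative integer, an Abel-summation identity together with the scalar $L_2$ square-function input from Greenblatt--Kolla--Krause yields the $L_2(\cN;\ell_\infty)$ bound. Analytic interpolation between $L_2$ at $\alpha=-t$ and $L_q$ at $\mathrm{Re}\,\alpha\ge 0$, with $q$ chosen arbitrarily close to $1$ and $t$ correspondingly large, then reaches every $p>1$ at $\alpha=-1$.

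Your scheme cannot substitute for this. First, the truncation $A_k=\sum_{j\le j_0(k)}\lambda_{k,j}P_j$ is not positivity-preserving, so there is no operator-order majorization by a convex combination of noise operators; a signed representation $A_k=\int R_\rho\,d\mu_k(\rho)$ with $\sup_k|\mu_k|(\R)<\infty$ uniformly in $d$ would be needed instead, and none is supplied. Second, and fatally for $1<p<2$: your square-function control of $(E_kf)_k$ is a genuine $L_2$ statement only. ``Interpolation with a crude bound for $p$ large'' yields at best $p\ge 2$, and Khintchine/Rosenthal inequalities do not apply because the $E_k$ are neither independent nor martingale differences---they are deterministic Fourier multipliers with overlapping spectral supports. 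There is no dimension-free endpoint near $p=1$ for the error term (the weak-$(1,1)$ constant for the full maximal operator already grows like $\sqrt d$), so the range $1<p<2$ remains unproved. The whole point of the Nevo--Stein analytic family is to manufacture, for each target $p>1$, a usable second endpoint at an \emph{arbitrary} $q>1$; this is the missing idea in your proposal.
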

\begin{remark}
	As noted in \cite[Remark 1.6]{GKK}, the region $p> 1$ is optimal in the sense that the optimal constant in the weak $(1,1)$ bound grows at least as fast as $\sqrt{d}$:
	$$\sup_{f\neq0}\frac{\|Mf\|_{L_{1,\infty}(\Z_{m+1}^d)}}{\|f\|_{L_1(\Z_{m+1}^d)}}\geq c\sqrt{d}.$$
\end{remark}

An immediate difficulty in proving Theorem \ref{main2} is the fact that the classical maximal function $Mf= \sup_{1\le k\le d} |T_k f|$ does not make sense in general when $(T_kf)$ is a sequence of operators (see \cite{JX}). Indeed, the right formulation of noncommutative $L_p$-maximal  inequalities was achieved via the introduction of vector-valued noncommutative $L_p$-spaces $L_p(\M;\ell_\infty)$, which play the roles of the $L_p$-norm of maximal function $Mf$ in the noncommutative setting \cite{J1}.

As an application, we establish the noncommutative spherical maximal inequality for automorphism actions of $\Z_{m+1}^d$ on von Neumann algebras. Recall that for a von Neumann algebra $\M$, we say that $\alpha:\Z_{m+1}^d\rightarrow \text{Aut}(\mathcal{M})$ is an action if for each $u$, $\alpha_u:\mathcal{M}
\to \mathcal{M}$ is a $*$-preserving automorphism; and for all $s,u\in \Z_{m+1}^d$, $\alpha_s\circ\alpha_u=\alpha_{s+u}$. If in additional, $\tau\circ \alpha_u=\tau$ for all $u\in \Z_{m+1}^d$, we say $\alpha$ is a $\tau$-preserving action or an action of $\tau$-preserving automorphisms, denoted as $\alpha \curvearrowright (\mathcal{M},\tau)$.
For each $1\leq k\leq d$, define
	\begin{equation}\label{discrete11}
M_kx=\frac{1}{\binom{d}{k}m^k}\sum_{u:|u|=k}\alpha_ux,\ x\in\M.
\end{equation}

Theorem \ref{main2}, together with the noncommutative Calder\'on transference principle developed in \cite{HLW}, implies the following result. Readers may refer to Section 2 for the definition of $\ell^c_\infty$-valued $L_p$-space $L_p(\M;\ell^c_\infty)$.
\begin{thm}\label{main1}
 Let $\alpha$ be an $\tau$-preserving action of $\Z_{m+1}^d$ on a semifinite von Neumann algebra $(\M,\tau)$. Let $(M_k)_{1\leq k\leq d}$ be the  defined as (\ref{discrete11}).	Then for $p>1$, 
	$$\|(M_kx)_{1\leq k\leq d}\|_{L_p(\M;\ell_\infty)}\leq C_{p,m}\|x\|_{L_p(\M)},\ \forall x\in L_p(\M);$$
	and for $p>2$,
	$$\|(M_kx)_{1\leq k\leq d}\|_{L_p(\M;\ell^c_\infty)}\leq \sqrt{C_{p,m}}\|x\|_{L_p(\M)},\ \forall x\in L_p(\M),$$
	where the constant $C_{p,m}$ depends only on $p$ and $m$.
\end{thm}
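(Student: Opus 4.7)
The strategy is to apply the noncommutative Calder\'on transference principle of \cite{HLW} to reduce Theorem \ref{main1} to Theorem \ref{main2}. Define $\iota:\M\to\mathcal{N}$ by $\iota(x)(s)=\alpha_s(x)$. Since $\alpha$ is $\tau$-preserving and $\mu$ is the normalized Haar measure on $\Z_{m+1}^d$, $\iota$ is a normal trace-preserving injective $*$-homomorphism, and it extends to an isometric embedding $L_p(\M)\hookrightarrow L_p(\mathcal{N})$ for every $p$. Using the symmetry $\{u\in\Z_{m+1}^d:|u|=k\}=\{-u:|u|=k\}$ of the Hamming sphere (because $u(i)=0$ iff $-u(i)=0$ in $\Z_{m+1}$), a direct computation yields the intertwining identity
\[
T_k\iota(x)(s)=\frac{1}{\binom{d}{k}m^k}\sum_{|u|=k}\alpha_{s-u}(x)=\alpha_s(M_k x)=\iota(M_k x)(s),
\]
so $T_k\circ\iota=\iota\circ M_k$.

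Now let $E:\mathcal{N}\to\iota(\M)$ be the normal trace-preserving conditional expectation $E(F)=\iota\bigl(\int_{\Z_{m+1}^d}\alpha_{-s}(F(s))\,d\mu(s)\bigr)$, which acts as the identity on $\iota(\M)$. In particular $\iota\circ M_k=E\circ T_k\circ\iota$. By the noncommutative transference principle of \cite{HLW}, $E$ extends to a contraction on both $L_p(\,\cdot\,;\ell_\infty)$ and $L_p(\,\cdot\,;\ell_\infty^c)$. Combining this with Theorem \ref{main2} applied to $f=\iota(x)$ and the isometry of $\iota$, we obtain for $p>1$
\[
\|(M_k x)_k\|_{L_p(\M;\ell_\infty)}=\|(\iota(M_k x))_k\|_{L_p(\mathcal{N};\ell_\infty)}\leq\|(T_k\iota(x))_k\|_{L_p(\mathcal{N};\ell_\infty)}\leq C_{p,m}\|x\|_{L_p(\M)}.
\]

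For the column estimate at $p>2$, note that $T_k$, being a convex combination of $*$-automorphisms of $\mathcal{N}$, is unital completely positive, so the Kadison-Schwarz inequality gives
\[
(T_k\iota(x))^*(T_k\iota(x))\leq T_k(\iota(x)^*\iota(x))=T_k\iota(x^*x).
\]
Using the standard identification $\|(y_k)_k\|_{L_p(\mathcal{N};\ell_\infty^c)}^2=\|(y_k^*y_k)_k\|_{L_{p/2}(\mathcal{N};\ell_\infty)}$ valid for $p\geq 2$, and applying Theorem \ref{main2} at exponent $p/2>1$ to the positive function $\iota(x^*x)$, we get
\[
\|(T_k\iota(x))_k\|_{L_p(\mathcal{N};\ell_\infty^c)}^2\leq\|(T_k\iota(x^*x))_k\|_{L_{p/2}(\mathcal{N};\ell_\infty)}\leq C_{p/2,m}\|x\|_{L_p(\M)}^2.
\]
Transferring via $E$ and absorbing $\sqrt{C_{p/2,m}}$ into the generic $p,m$-dependent constant $\sqrt{C_{p,m}}$ in the statement completes the proof. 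The main technical subtlety is confirming that $E$ contracts the $\ell_\infty$- and $\ell_\infty^c$-valued $L_p$-norms (not merely the scalar $L_p$-norms); this is precisely the Junge-type Doob-inequality input that powers the transference principle of \cite{HLW}, and without it the reduction to Theorem \ref{main2} would fail.
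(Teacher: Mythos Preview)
Your proof is correct and takes essentially the same approach as the paper: both reduce to Theorem~\ref{main2} via the intertwining $T_k\circ\iota=\iota\circ M_k$ (the paper writes this as $\alpha_{-s}M_kx=T_kf(s)$ with $f(s)=\alpha_{-s}x$) and then derive the column estimate from the symmetric one by Kadison--Schwarz applied to the unital completely positive averages. The only difference is in packaging---you run the transference through the embedding $\iota$ and the contraction of the conditional expectation $E$ on $L_p(\,\cdot\,;\ell_\infty)$, whereas the paper does it by hand, averaging $\|(\alpha_{-s}M_kx)_k\|_{L_p(\M;\ell_\infty)}^p$ over $s\in\Z_{m+1}^d$ (using that each $\alpha_s$ is an isometry on $L_p(\M;\ell_\infty)$, from \cite{HRW}) and proving the explicit disintegration inequality $\sum_s\|(T_kf(s))_k\|_{L_p(\M;\ell_\infty)}^p\le\|(T_kf)_k\|_{L_p(\mathcal{N};\ell_\infty)}^p$ directly from the factorization definition and H\"older.
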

Theorem \ref{main1} can be rephrased as a maximal inequality for multiple ergodic averages. Let $(L_i)_{1\leq i\leq d}$ be  a commuting family (i.e. $L_iL_j=L_jL_i$) of $\tau$-preserving automorphisms on $\M$ such that $L_i^{m+1}=\id$. For each positive integer $k$, define
\begin{equation}\label{discrete3}
	N_{k}x=\frac{1}{\binom{d}{k}m^k}\sum_{n:|n|=k}L^nx,\ x\in\M.
\end{equation}
Here $n=(n_1,\cdot\cdot\cdot,n_d)$ and $L^n=L_1^{n_1}L_2^{n_2}\cdots L_d^{n_d}$. Then for $p>1$,
			$$\|(N_kx)_{1\leq k\leq d}\|_{L_p(\M;\ell_\infty)}\leq C_{p,m}\|x\|_{L_p(\M)},\ \forall x\in L_p(\M);$$
and $p>2$,
$$\|(N_kx)_{1\leq k\leq d}\|_{L_p(\M;\ell^c_\infty)}\leq \sqrt{C_{p,m}}\|x\|_{L_p(\M)},\ \forall x\in L_p(\M).$$


We conclude this introduction by outlining the structure of the paper. Section 2 reviews some basic background on noncommutative $L_p$-spaces. The proof of Theorem \ref{main2} are presented in Sections 3 and 4. Theorem \ref{main1} is proved in Section 5, and several illustrative examples of its applications are provided in Section 6. 


\section{Preliminaries}
\subsection{Noncommutative $L_p$-spaces}
Let $\M$ be a von Neumann algebra equipped with a normal semifinite faithful trace $\tau:\M_+\rightarrow [0,\infty]$.
Let $\mathcal{S_{\M+}}$ be the set of all positive elements $x\in\M$ such that $\tau(\mathrm{supp}(x))<\infty$, where  $\mathrm{supp}(x)$ denotes the support projection of  $x$ and $\mathcal{S}_{\M}$ be the linear span of $\mathcal{S_{\M+}}$. Then $\mathcal{S}_{\M}$ forms a $w^{*}$-dense $\ast$-subalgebra of $\M$. For $1\leq p<\infty$, the noncommutative $L_p$-space $L_{p}(\M,\tau)$ (in short, $L_p(\mathcal{M})$) is the completion of $\mathcal{S}_{\M}$ under the norm,
$$\|x\|_{p}=[\tau(|x|^p)]^{1/p}\ \ , x\in\mathcal{S}_{\M},$$
where $|x|=(x^{\ast}x)^{\frac{1}{2}}$. By convention,  $L_{\infty}(\M) = \M$ equipped with the operator norm. We write $L^+_{p}(\M)$ the positive cone of $L_{p}(\M)$. We refer to \cite{FK,P2} for further information on noncommutative $L_p$-spaces.

\subsection{Vector-valued noncommutative $L_p$-spaces}\label{tracial}
We refer to \cite{J1,JX} for the references on vector-valued noncommutative $L_p$-spaces. Let $I$ be an index set.
Given $1\le p\leq\infty$, the space $L_p(\mathcal {M};\ell_\infty(I))$ consists of all families $(x_n)_{n\in I}$ in $L_p(\mathcal {M})$ that admit a factorization $x_n=ay_nb$ with $a,b\in L_{2p}( \mathcal {M})$  and $(y_n)_{n\in I}\subset \mathcal {M}$, equipped with the norm
$$\|(x_n)_{n\in I}\|_{L_p(\mathcal {M};\ell_\infty(I))}=\inf\left\{\big\|a\big\|_{{2p}}\sup_{n\in I}\big\|y_n\big\|_\infty\big\|b\big\|_{{2p}}\right\},$$
where the infimum is taken over all possible factorizations $x_n=ay_nb, n\in I$.
Following standard convention, this norm is often denoted concisely as $\|{\sup^+_{n\in I}}x_n\|_p$. 
\begin{remark}\label{rk:MaxFunct}\rm
	For a sequence of positive operators $(x_n)_{n\in I}\subset L_p^+(\mathcal {M})$, $x=(x_n)_{n\in I}$ in $L_p(\mathcal {M};\ell_\infty(I))$ iff there exists $a\in L^+_{p}(\M)$
	such that $x_n\leq a$ for all $n\in I$. Moreover,
	$$\|x\|_{L_p(\mathcal {M}; \ell_\infty(I))}=\inf\Big\{\|a\|_p: a\in L^+_{p}(\M)\ \mbox{such that}\ x_n\leq a
	,\ \forall n\in I\Big\}.$$
\end{remark}

\noindent In the sequel, we omit the index set $I$ when it will not cause confusions.

As established in \cite{J1}, for every $p>1$, the predual space of $L_{p}(\mathcal {M};\ell_\infty)$ is $L_{p'}(\mathcal {M};\ell_1)$, where $p'=\frac{p}{p-1}$ is the conjugate index of $p$. Given $1\leq p\leq\infty$, the space
$L_{p}(\mathcal {M};\ell_1)$ consists of all sequences $x=(x_n)_{n}$ in $L_{p}(\mathcal {M})$ admitting a factorization of the form
$$x_n=\sum_{k}u_{kn}^{\ast}v_{kn}, \quad \forall n $$
for two families $(u_{kn})_{k,n}$ and $(v_{kn})_{k,n}$ in $L_{2p}(\mathcal {M})$ such that
$$\sum_{k,n}u^{\ast}_{kn}u_{kn}\in L_{p}(\mathcal {M})\;\; \text{and}\;\; \sum_{k,n}v^{\ast}_{kn}v_{kn}\in L_{p}(\mathcal {M}).$$
The norm of $x$ in $L_{p}(\mathcal {M};\ell_1)$ is defined by
$$\|x\|_{L_{p}(\mathcal {M};\ell_1)}=\inf \|\sum_{k,n}u^{\ast}_{kn}u_{kn}\|_{p}^{1/2}\|\sum_{k,n}v^{\ast}_{kn}v_{kn}\|_{p}^{1/2},$$
where the infimum runs over all possible decompositions $x_n=\sum_{k}u_{kn}^{\ast}v_{kn}$.
The duality between $L_{p}(\mathcal {M};\ell_{\infty})$ and $L_{p'}(\mathcal {M};\ell_1)$ is given by
$$\langle x,\;y\rangle=\sum_n\tau(x_ny_n),\quad x=(x_n)_{n}\in L_{p}(\mathcal {M};\ell_\infty),\;y=(y_n)_{n}\in L_{p'}(\mathcal {M};\ell_1).$$
\begin{remark}\label{rk:MaxFunct1}\rm
	A positive sequence $x=(x_n)_{n}$ belongs to  $L_p(\mathcal {M};\ell_1)$ iff $\sum_nx_n\in L_{p}(\M)$; and in this case,
	$$\|x\|_{L_p(\mathcal {M}; \ell_1)}=\big\|\sum_nx_n\big\|_p.$$
\end{remark}
We collect some important properties of these two vector-valued noncommutative $L_p$-spaces. See \cite{JX} for the detailed proofs.
\begin{prop}\label{JunXu}
The following statements hold.
		\begin{itemize}
		\item[(i)] Each element in the unit ball of $L_p(\M;\ell_\infty)$ (resp. $L_p(\M;\ell_1)$) can be written as a sum of
		sixteen (resp. eight) positive elements in the same ball.
		\item[(ii)] Let $x=(x_n)$ be a positive sequence in $L_p(\M;\ell_\infty)$. Then for any $1\leq p\leq\infty$,
		$$\|{\sup_{n}}^+x_n\|_p=\sup\Big\{\sum_n\tau(x_ny_n):y_n\in L_{p^\prime}(\M)\ \mbox{and}\ \big\|\sum_ny_n\big\|_{p^\prime}\leq1\Big\}.$$
		In particular, for two positive sequences $(x_n)\le (y_n)$, $\|{\sup_{n}}^+x_n\|_p\le \|{\sup_{n}}^+y_n\|_p.$
		\item[(iii)] Let $x=(x_n)$ be a  sequence in $L_p(\M;\ell_\infty)$ and $(z_{n,k})_{n,k}\in\C$. Then for any $1\leq p\leq\infty$, $$\big\|{\sup_{n}}^+\sum_kz_{n,k}x_k\big\|_p\leq\sup_n(\sum_k|z_{n,k}|)\|{\sup_{n}}^+x_n\|_p.$$
		
		\item[(iv)] Let $1\leq p_0<p_1\leq \infty$ and $0<\theta<1$. We have the following complex interpolation relation,
		$$L_p(\M;\ell_1)=\big(L_{p_0}(\M;\ell_1),L_{p_1}(\M;\ell_1)\big)_\theta\ \ \text{isometrically},$$
		where $\frac{1}{p}=\frac{1-\theta}{p_0}+\frac{\theta}{p_1}$.
	\end{itemize}	
\end{prop}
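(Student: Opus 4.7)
The plan is to work directly from the Junge--Xu factorization definitions and to use duality between $L_p(\M;\ell_\infty)$ and $L_{p'}(\M;\ell_1)$. For (i), given $x_n = ay_nb$ realizing the $L_p(\M;\ell_\infty)$-norm up to $\epsilon$, I would first decompose $a$ and $b$ into real/imaginary and positive/negative parts (four positive pieces each) and decompose each $y_n$ analogously into four positive pieces. Combining these with the polarization identity
\[
4A^*yB = (A+B)^*y(A+B) - (A-B)^*y(A-B) + i(A+iB)^*y(A+iB) - i(A-iB)^*y(A-iB),
\]
applied termwise and then grouped with the sign data absorbed into the factor sequences, produces the claimed sixteen positive sequences while tracking the $L_{2p}$-norms to keep each one in the unit ball. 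For $L_p(\M;\ell_1)$, the factorization $x_n=\sum_k u_{kn}^*v_{kn}$ polarizes directly via the sesquilinear form $(u,v)\mapsto u^*v$ into a signed sum of four expressions $(u\pm v)^*(u\pm v)$ and $(u\pm iv)^*(u\pm iv)$; after separating real and imaginary signs, one reads off the eight positive sequences.

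For (ii), the supremum formula is the unit-ball description of the dual norm coming from $L_p(\M;\ell_\infty)=L_{p'}(\M;\ell_1)^*$ with the bracket stated in the excerpt; by item (i) applied in the predual together with Remark \ref{rk:MaxFunct1}, the supremum over the unit ball of $L_{p'}(\M;\ell_1)$ reduces to the supremum over \emph{positive} sequences $(y_n)$ satisfying $\|\sum_n y_n\|_{p'}\le 1$. The monotonicity in the final sentence is then automatic, since $0\le x_n\le z_n$ and $y_n\ge 0$ give $\tau(x_n y_n)\le \tau(z_n y_n)$ for every admissible test sequence. Item (iii) is a one-line manipulation: if $x_k=ay_kb$ is a near-optimal factorization, then $\sum_k z_{n,k}x_k = a\bigl(\sum_k z_{n,k}y_k\bigr)b$, and
\[
\Bigl\|\sum_k z_{n,k}y_k\Bigr\|_\infty \le \sup_n\Bigl(\sum_k|z_{n,k}|\Bigr)\sup_k\|y_k\|_\infty,
\]
so taking the infimum over factorizations gives the advertised bound.

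Item (iv) is the most delicate. The strategy is first to establish the parallel dual interpolation $L_p(\M;\ell_\infty)=(L_{p_0}(\M;\ell_\infty),L_{p_1}(\M;\ell_\infty))_\theta$ by constructing an analytic family of factorizations $x_n(z)=a(z)y_n(z)b(z)$ whose boundary values on $\{\mathrm{Re}\,z=0,1\}$ realize the endpoint factorizations, and then to dualize using (ii) combined with the standard identification of the complex interpolation of a reflexive compatible couple with its own dual. The main obstacle is precisely this analytic-family construction: producing factorizations that are holomorphic in $z$ with simultaneously controlled $L_{2p_j}$-boundary behaviour is the technical core of the Junge--Xu theory. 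For the purposes of the present paper, however, all four items are applied as a black box from \cite{JX}, and the actual work of the paper is concentrated in the proof of Theorem \ref{main2}.
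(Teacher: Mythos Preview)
The paper gives no proof of this proposition: immediately before the statement it says ``See \cite{JX} for the detailed proofs,'' and nothing further is argued. Your proposal ends on exactly this note, so you and the paper agree on the bottom line.

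Your sketches for (i)--(iii) are the standard arguments and are fine. For (iv) your suggested route---prove the $\ell_\infty$ interpolation first via analytic factorizations and then dualize---is not how Junge--Xu actually proceed (they handle $L_p(\M;\ell_1)$ more directly, and the duality-of-interpolation step you invoke needs care at the endpoints $p_0=1$ or $p_1=\infty$ where reflexivity fails), but since both you and the paper treat this as a citation rather than something to be reproved, this is a side remark rather than a gap.
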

We then turn to the definition of  $L_p(\M;\ell^c_\infty)$. Let $2\leq p\leq\infty$, the space $L_p(\M;\ell^c_\infty)$ is defined as the family of all $(x_n)_n$ in $L_p(\M)$ for which there are an $a\in L_p(\M)$ and $(z_n)_n\subseteq L_\infty(\M)$ such that
$$x_n=z_na\ \ \mbox{and}\ \ \sup_n\|z_n\|_\infty<\infty.$$
The norm $\|(x_n)_n\|_{L_p(\M;\ell^c_\infty)}$ is then defined as  to be the infimum  $\inf\{\sup_n\|z_n\|_\infty\|a\|_p\}$ over all
factorizations of $(x_n)_n=(z_n)a$ as above. Note that $(x_n)_n\in L_p(\M;\ell^c_\infty)$ iff $(x^*_nx_n)_n\in L_{p/2}(\M;\ell_\infty)$. 

Finally, we introduce the $\ell_\infty$-valued noncommutative weak $L_p$-spaces $\mathcal{L}_{p,\infty}(\M;\ell_{\infty})$ (see e.g. \cite{HLX}).
Given a sequence $(x_n)_{n}$
in $L_p(\mathcal {M})$ with $1\leq p<\infty$, we define
\begin{align*}
	\|(x_n)_{n}\|_{\mathcal{L}_{p,\infty}(\M;\ell_{\infty})}
	=\sup_{\mathcal{L}>0} \ \mathcal{L}\inf_{}
	\Big\{\big(\tau(e^{\perp})\big)^{\frac{1}{p}}: e\in\mathcal{P}(\M) \text{ s.t. } \|ex_ke\|_{\infty}\leq\mathcal{L}\ \mbox{for\ all}\ k\Big\},
\end{align*}where $\mathcal{P}(\M)$ is the set of all projections in $\M$. The space $\mathcal{L}_{p,\infty}(\M;\ell_{\infty})$ consists of all sequences $x=(x_n)_{n}$ for which this quasi-norm is finite.

\subsection{Noise operators} Recall that the dual group of $\Z_{m+1}^d$ is itself $\Z_{m+1}^d$. In the following, we will denote points in the group by lowercase letters, and frequencies in the dual group by capital letters.
For each $S,u\in \Z_{m+1}^d$,  we define the $L_2$-normalized character
$$\chi_{S}(u)=\frac{1}{(m+1)^{d/2}}\xi_m^{S\cdot u}=\frac{1}{(m+1)^{d/2}}\prod_{i=1}^d\xi_m^{S(i)u(i)},$$
where $\xi_m=e^{2\pi i/(m+1)}$ is a primitive $(m+1)$-th root of unity and $S\cdot u=\sum_{i=1}^dS(i)u(i)$. A operator-valued function $f:\Z_{m+1}^d\rightarrow \M$ admits a Fourier expansion of the form:
$$f(u)=\sum_{S\in\Z_{m+1}^d}\widehat{f}(S)\chi_{S}(u),$$
where each Fourier coefficient is given by
$$\widehat{f}(S)=\sum_{u\in\Z_{m+1}^d}f(u)\overline{\chi_{S}(u)}.$$

For each $t>0$, the noise operator $N_t$ is defined by via its Fourier expansion
\begin{align}\label{noise2}
	N_tf(u)=\sum_{S\in\Z_{m+1}^d}e^{-t|S|}\widehat{f}(S)\chi_{S}(u),
\end{align}
and thus the family of operators $(N_t)_{t\geq 0}$ form a semigroup. For each $T>0$, define
\begin{align}\label{noise4}
	H_Tf=\frac{1}{T}\int_0^TN_tfdt.
\end{align}
The boundedness theory for the family $(H_T)_{T\geq0}$ was first established by Lance \cite{Lan76} for the case $p=\infty$, and later by Yeadon \cite{Y1} for $p=1$. However, the case $1<p<\infty$ remained open for thirty years until it was finally resolved by Junge and Xu \cite{JX} through the development of noncommutative Marcinkiewicz interpolation. Junge-Xu's result has since been widely applied in noncommutative ergodic theory and noncommutative harmonic analysis. We state their results as the following theorem.
\begin{thm}[\cite{Lan76,JX,Y1}]\label{maximal1}
Let $\mathcal{N}=L_\infty(\Z^d_{m+1})\bar{\otimes}\M$.
The following assertions hold with a positive constant
$C_{p,m}$ depending only on $p$ and $m$:
	\begin{itemize}
		\item[(i)] for $p=1$ and any operator-valued function $f\in L_1(\mathcal{N})$,
		$$\|(H_Tf)_{T>0}\|_{\mathcal{L}_{1,\infty}(\mathcal{N};\ell_\infty)}\leq C_{1,m}\|f\|_{L_1(\mathcal{N})};$$
		
		\item[(ii)] for $1<p\leq \infty$ and any operator-valued function $f\in L_p(\mathcal{N})$, $$\|(H_Tf)_{T>0}\|_{L_{p}(\mathcal{N};\ell_\infty)}\leq C_{p,m}\|f\|_{L_p(\mathcal{N})}\ \  .$$	
	\end{itemize}	
\end{thm}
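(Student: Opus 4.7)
The plan is to verify that $(N_t)_{t \geq 0}$ satisfies the hypotheses of the noncommutative Dunford-Schwartz maximal ergodic framework, and then invoke Yeadon's weak type $(1,1)$ result for (i) and the Junge-Xu noncommutative Marcinkiewicz interpolation for (ii).

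First I would exhibit the product structure of the semigroup. Since $|S|$ is the Hamming weight, it decomposes coordinatewise, and the Fourier multiplier factors as $e^{-t|S|} = \prod_{i=1}^d m_t(S(i))$ with $m_t(0) = 1$ and $m_t(s) = e^{-t}$ for $s \neq 0$. Consequently $N_t = \bigotimes_{i=1}^d N_t^{(i)} \otimes \mathrm{id}_{\M}$ on $\mathcal{N}$, where each $N_t^{(i)}$ acts on $L_\infty(\Z_{m+1})$ by convolution with a single-coordinate kernel. A direct inverse-Fourier computation identifies this kernel as
\[
K_t = e^{-t}\delta_0 + (1 - e^{-t})\mu,
\]
where $\mu$ is the uniform probability measure on $\Z_{m+1}$. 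Since $K_t$ is a probability measure, $N_t^{(i)}$ is completely positive, unital and trace-preserving, and these properties are preserved under the tensor product that defines $N_t$.

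From these properties $N_t$ is a contraction on every $L_p(\mathcal{N})$, and the semigroup law $N_s N_t = N_{s+t}$ is immediate from the Fourier representation. The $p = \infty$ case of (ii) follows at once, since $H_T$ is a norm-convergent average of contractions on $L_\infty(\mathcal{N})$. For (i), the averages $H_T$ of a positive trace-preserving continuous semigroup of $L_1$-contractions fall under the scope of Yeadon's weak type $(1,1)$ maximal ergodic theorem, which yields the claimed estimate in $\Lambda_{1,\infty}(\mathcal{N}; \ell_\infty)$.

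For (ii) with $1 < p < \infty$, I would interpolate between (i) and the endpoint $p = \infty$ bound via the noncommutative Marcinkiewicz interpolation theorem of Junge-Xu, which is designed precisely for the vector-valued spaces $L_p(\mathcal{N}; \ell_\infty)$ recalled in Section 2. The principal technical subtlety is that the index set $T > 0$ is uncountable, so the supremum in $L_p(\mathcal{N}; \ell_\infty)$ must be interpreted through Remark \ref{rk:MaxFunct}; strong continuity of $t \mapsto N_t f$ in $L_p(\mathcal{N})$ allows one to restrict to a countable dense set $T \in \mathbb{Q}_+$, after which the classical results apply verbatim.
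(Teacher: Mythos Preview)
Your proposal is correct. The paper does not actually prove this theorem; it merely states it as a consequence of the cited works of Lance, Yeadon, and Junge--Xu, remarking beforehand that the $p=\infty$ case is due to Lance, the $p=1$ case to Yeadon, and the intermediate range to Junge--Xu via noncommutative Marcinkiewicz interpolation. Your sketch supplies exactly the verification the paper leaves implicit: the identification of $N_t$ as convolution by the product probability measure with single-coordinate kernel $e^{-t}\delta_0+(1-e^{-t})\mu$ confirms that $(N_t)_{t\ge 0}$ is a semigroup of unital, completely positive, trace-preserving maps on $\mathcal{N}$, placing it squarely within the Dunford--Schwartz framework required by those theorems. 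One minor remark: the Junge--Xu bound in fact yields a constant depending only on $p$ (not on $m$ or $d$), so the $m$-dependence in $C_{p,m}$ is harmless slack in the statement rather than something your argument needs to track.
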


\section{The smooth spherical maximal operators}
In this section, we study the boundedness theory of two smooth spherical maximal operators.
By triangle's inequality in $L_p(\mathcal{N};\ell_\infty)$,
\begin{align*}
	\|(T_kf)_{1\leq k\leq d}\|_{L_p(\mathcal{N};\ell_\infty)} &\leq
	\|(T_kf)_{1\leq k\leq \frac{md}{m+1}}\|_{L_p(\mathcal{N};\ell_\infty)}+\|(T_kf)_{\frac{md}{m+1}\leq k\leq d}\|_{L_p(\mathcal{N};\ell_\infty)}
	\\
	&= \|(T_kf)_{1\leq k\leq \frac{md}{m+1}}\|_{L_p(\mathcal{N};\ell_\infty)}+\|(T_{d-k}f)_{1\leq k\leq \frac{d}{m+1}}\|_{L_p(\mathcal{N};\ell_\infty)},
\end{align*}
where the convolution operators $T_k, 1\le k\le d$ are defined in (\ref{discrete2}). Therefore,
to establish Theorem \ref{main2}, it suffices to  estimate the aforementioned local term $(T_kf)_{1\leq k\leq \frac{md}{m+1}}$ and distant terms $(T_{d-k}f)_{1\leq k\leq \frac{d}{m+1}}$. For this purpose, we introduce the corresponding regularized operators for both local and distant terms. For $K\geq0$, define
\begin{align*}&T_K^M:= \frac{1}{K+1}\sum_{k\leq K} T_k\ ,\  T_K^Mf=\frac{1}{K+1}\sum_{k\leq K}\sigma_k\ast f , \\ \ \ &T_K^D:=\frac{1}{K+1}\sum_{k\leq K} T_{d-k} \ ,\ T_K^Df=\frac{1}{K+1}\sum_{k\leq K}\sigma_{d-k}\ast f.\end{align*}
\begin{prop}\label{smooth1}
	Let $1\leq p\leq\infty$. 
	There exists a constant $C_{p,m}$ depending only on $p$ and $m$ such that
	\begin{itemize}
		\item[(i)] for $p=1$ and any $f\in L_1(\mathcal{N})=L_1(\mathbb{Z}_{m+1}^d; L_1(\mathcal{M}))$, 
\begin{align*}&\|(T_K^Mf)_{0\leq K\leq \frac{md}{m+1}}\|_{\mathcal{L}_{1,\infty}(\mathcal{N};\ell_\infty)}\leq C_{1,m}\|f\|_{L_1(\mathcal{N})}\ ,\\
&\|(T_K^Df)_{0\leq K\leq \frac{d}{m+1}}\|_{\mathcal{L}_{1,\infty}(\mathcal{N};\ell_\infty)}\leq C_{1,m}\|f\|_{L_1(\mathcal{N})}\ ;
\end{align*}
		\item[(ii)] for $1<p\leq \infty$ and any $f\in L_p(\mathcal{N})=L_p(\mathbb{Z}_{m+1}^d; L_p(\mathcal{M}))$, 
\begin{align*}&\|(T_K^Mf)_{0\leq K\leq \frac{md}{m+1}}\|_{L_{p}(\mathcal{N};\ell_\infty)}\leq C_{p,m}\|f\|_{L_p(\mathcal{N})}\ ,\\
&\|(T_K^Df)_{0\leq K\leq \frac{d}{m+1}}\|_{L_{p}(\mathcal{N};\ell_\infty)}\leq C_{p,m}\|f\|_{L_p(\mathcal{N})}\ .
\end{align*}
	\end{itemize}
\end{prop}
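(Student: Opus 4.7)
The plan is to reduce both statements to Theorem 2.6 by dominating each smoothed spherical operator, on positive $f$, by a constant multiple of the time-averaged noise semigroup $H_T$, and then transferring the maximal bound via the positivity/monotonicity machinery in Proposition 2.5. Decomposing the noise kernel $P_t(u)=p_0(t)^{d-|u|}p_1(t)^{|u|}$ according to $|u|$, with $p_0(t)=(1+me^{-t})/(m+1)$ and $p_1(t)=(1-e^{-t})/(m+1)$, yields the identity
\begin{equation*}
N_t=\sum_{k=0}^d w_k(t)\,T_k,\qquad w_k(t)=\binom{d}{k}\pi(t)^k(1-\pi(t))^{d-k},\quad \pi(t)=\frac{m(1-e^{-t})}{m+1},
\end{equation*}
so that $H_T=\sum_k\beta_k^{(T)}T_k$ with $\beta_k^{(T)}=T^{-1}\int_0^T w_k(t)\,dt$.

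For the local statement, I would pick $T(K)$ with $d\pi(T(K))\approx K$ and establish the uniform-in-$d$ incomplete-beta estimate $\beta_k^{(T(K))}\ge c_m/(K+1)$ for every $0\le k\le K$. The estimate follows from the binomial/beta identity $\binom{d}{k}\int_0^p u^k(1-u)^{d-k}\,du=(d+1)^{-1}\Pr[\operatorname{Bin}(d+1,p)>k]$ together with a CLT-type lower bound on this probability when $k\le K$; the boundary regime $K\approx md/(m+1)$, where $T(K)\to\infty$, requires a more careful multi-scale variant. Positivity then gives $T_K^M f\le (1/c_m)H_{T(K)}f$ on $f\ge 0$; Proposition 2.5(ii) and Theorem 2.6 deliver the $L_p(\mathcal{N};\ell_\infty)$ bound; Proposition 2.5(i) reduces general $f$ to positive $f$; and Theorem 2.6(i) handles the $p=1$ endpoint identically.

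For the distant statement the noise semigroup alone is insufficient, since $w_k(t)$ is supported on $k\lesssim md/(m+1)$. My plan is to reproduce the argument with a companion semigroup $\widetilde N_t$ whose convex decomposition is $\sum_k w_k(t)\,T_{d-k}$. In the Boolean case $m=1$, $\widetilde N_t=V N_t V^{-1}$ for the antipodal unitary $Vf(u)=f(u+\mathbf 1)$ (since $|u+\mathbf 1|=d-|u|$), reducing the distant case at once to the local one. For $m\ge 2$ no such involution of $\Z_{m+1}^d$ is available, so $\widetilde N_t$ must be built at the Fourier-multiplier level, and its maximal bound re-derived by rerunning the Junge--Xu interpolation argument underlying Theorem 2.6 for this reflected semigroup. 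Step 2 then applies verbatim to give $T_K^D f\le C_m\widetilde H_{T(K)}f$ and the required maximal bound.

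The main obstacle is Step 3 for $m\ge 2$: constructing the reflected semigroup $\widetilde N_t$ and verifying that it lies within the scope of Theorem 2.6, without the unitary conjugation available in the Boolean case. A secondary obstacle is the incomplete-beta estimate at the boundary $K\approx md/(m+1)$, where the basic single-scale argument degrades by a logarithmic factor and a multi-scale refinement is needed.
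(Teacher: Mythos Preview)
Your plan for $T_K^M$ is essentially the paper's approach: dominate the Ces\`aro kernel $\frac{1}{K+1}\sum_{l\le K}\sigma_l$ pointwise by a time-average of the noise kernel, then invoke Theorem~2.6 via positivity. The paper phrases this in the $\eta$-parametrization $\widetilde N_\eta f=f\ast\mu_\eta^d$ and the averages $J_Pf=\frac{1}{P}\int_0^P\widetilde N_\eta f\,d\eta$, shows $J_P=\int_0^\infty H_T\,d\nu_P(T)$ for an explicit probability measure $\nu_P$, and then cites the kernel comparison $\frac{1}{K+1}\sum_{l\le K}\sigma_l\le C\,\frac{1}{P(K)}\int_0^{P(K)}\mu_\eta^d\,d\eta$ from \cite[Proposition~4.8]{GKK} rather than re-deriving the incomplete-beta estimate. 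Your boundary worry at $K\approx md/(m+1)$ is absorbed into that citation.

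For $T_K^D$ there is a genuine gap in your plan, and the paper's route is much simpler than building a reflected semigroup. For $m\ge 2$ your $\widetilde N_t=\sum_k w_k(t)T_{d-k}$ need not be a semigroup at all (the ``reflected'' kernels are not a convolution semigroup on $\Z_{m+1}^d$ once the antipodal involution is lost), so Theorem~2.6 cannot be applied to it as stated. The paper avoids this entirely by a one-line kernel inequality (again from \cite[Proposition~4.8]{GKK}):
\[
\frac{1}{\lfloor L/m\rfloor+1}\sum_{l\le L/m}\sigma_{d-l}\ \le\ C_m\,\frac{1}{L+1}\sum_{l\le L}\sigma_l\ast\sigma_d,
\]
valid for $0\le L\le md/(m+1)$. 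Convolving with a positive $f$ gives $T_K^D f\le C_m\,T_L^M(\sigma_d\ast f)$ for suitable $L$, and since $\sigma_d$ is a probability measure, Young's inequality $\|\sigma_d\ast f\|_p\le\|f\|_p$ reduces the distant case directly to the already-established local bound for $T_L^M$. This handles all $m\ge 1$ uniformly, with no new semigroup needed.
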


The proof of Proposition \ref{smooth1} relys on Theorem \ref{maximal1}. To see this,
given any fixed parameter $\eta\in (0,1)$, we define a probability measure $\mu_\eta$ on $\Z_{m+1}$ by $\mu_\eta(w)=1-\eta$ if $w=0$ and $\mu_\eta(w)=\frac{\eta}{m}$ otherwise. 
This induces the product measure on $\Z_{m+1}^d$: for $u\in\Z_{m+1}^d$,
$$\mu^d_\eta(u)=\Big(\frac{\eta}{m}\Big)^{|u|}(1-\eta)^{d-|u|}.$$
It was proved from \cite[Lemma 4.6]{GKK} that
\begin{align}\label{noise1} \hat{\mu}_\eta(S)=\Big(1-\frac{(m+1)\eta}{m}\Big)^{|S|}\ , \ f\ast \mu^d_\eta(u)=\sum_{S\in\Z_{m+1}^d}\Big(1-\frac{(m+1)\eta}{m}\Big)^{|S|}\widehat{f}(S)\chi_{S}(u).
\end{align}
where $\hat{\mu}^d_\eta$ denotes the Fourier transform of $\mu^d_\eta$.
For $0<P\leq \frac{m}{m+1}$, denote 
\begin{align}\label{noise3}
\widetilde{N}_\eta f=f\ast \mu^d_\eta\ ,\ 	J_Pf:=\frac{1}{P}\int_{0}^P\widetilde{N}_\eta fd\eta.
\end{align}
Note that $\widetilde{N}_\eta=N_{(1-\frac{(m+1)\eta}{m})}$ is a reparametrization of the noise operator.
\begin{lemma}\label{ergodic}
There exists a constant $C_{p,m}$ depending only on $p$ and $m$ such that
	\begin{itemize}
		\item[(i)] for $p=1$,
$$\|(J_Pf)_{0<P\leq \frac{m}{m+1}}\|_{\mathcal{L}_{1,\infty}(\mathcal{N};\ell_\infty)}\leq C_{1,m}\|f\|_{1}\ \ \forall f\in L_1(\mathcal{N});$$
		
		\item[(ii)] for $1<p\leq \infty$, $$\|(J_Pf)_{0<P\leq \frac{m}{m+1}}\|_{L_{p}(\mathcal{N};\ell_\infty)}\leq C_{p,m}\|f\|_{p}\ \ \forall f\in L_p(\mathcal{N}).$$
	\end{itemize}		
\end{lemma}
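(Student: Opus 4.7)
The lemma should reduce to the maximal inequality for the ergodic averages $H_T$ from Theorem~\ref{maximal1} once I recognize $J_P f$ as a probabilistic average of the $H_t f$'s. First I would use the reparametrization $\widetilde{N}_\eta = N_{t(\eta)}$ with
$$t(\eta) := -\log\Bigl(1 - \tfrac{(m+1)\eta}{m}\Bigr),$$
which is immediate from (\ref{noise1}) and (\ref{noise2}); this is a bijection between $\eta \in [0, \tfrac{m}{m+1})$ and $t \in [0,\infty)$ with $d\eta = \tfrac{m}{m+1} e^{-t}\, dt$ and $P = \tfrac{m}{m+1}(1-e^{-T})$ for $T := t(P)$. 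The change of variable in (\ref{noise3}) then yields
$$J_P f = \frac{1}{1-e^{-T}} \int_0^T e^{-t}\, N_t f\, dt.$$
Integrating by parts against the primitive $F(t) := \int_0^t N_s f\,ds = t\, H_t f$ rewrites this as
$$J_P f = \frac{T e^{-T}}{1-e^{-T}}\, H_T f \;+\; \int_0^T \frac{t\, e^{-t}}{1-e^{-T}}\, H_t f\, dt,$$
and the elementary identity $T e^{-T} + \int_0^T t e^{-t}\, dt = 1 - e^{-T}$ shows that the coefficients of the $H_t f$'s form a probability measure on $(0,T]$.

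\textbf{Deducing the maximal inequalities.} For part (ii) ($p > 1$), Proposition~\ref{JunXu}(i) reduces the claim to positive $f \in L_p^+(\mathcal{N})$. Theorem~\ref{maximal1}(ii) then produces $a \in L_p^+(\mathcal{N})$ with $\|a\|_p \leq C_{p,m} \|f\|_p$ and $H_t f \leq a$ for every $t > 0$. Integrating this operator inequality against the probability measure from Step~1 immediately gives $J_P f \leq a$ for every $P \in (0, \tfrac{m}{m+1}]$, and Remark~\ref{rk:MaxFunct} concludes the strong-type bound. For part (i) ($p=1$), I split $f = f_1 - f_2 + i(f_3 - f_4)$ into four positive parts and apply Theorem~\ref{maximal1}(i) to each $f_k$: at any level $\lambda > 0$ this gives a projection $e_k$ with $\|e_k H_t f_k e_k\|_\infty \leq \lambda$ for all $t > 0$ and $\tau(e_k^\perp) \lesssim \|f_k\|_1/\lambda$. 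Sandwiching the convex-combination formula for $J_P f_k$ by $e_k$ transfers this bound to $\|e_k J_P f_k e_k\|_\infty \leq \lambda$, and combining the four projections by their infimum together with a quasi-triangle argument in $\Lambda_{1,\infty}(\mathcal{N};\ell_\infty)$ closes out the weak-type estimate.

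\textbf{Main obstacle.} There is no substantial obstacle: once the algebraic identity exhibiting $J_P f$ as a convex combination of $(H_t f)_{0<t\leq T}$ is in hand, the lemma drops out of Theorem~\ref{maximal1}. The only bookkeeping to watch is the constant produced by splitting into four positive parts and intersecting four projections in the weak-type step, but this is completely standard.
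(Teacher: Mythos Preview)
Your proposal is correct and follows essentially the same route as the paper: both exhibit $J_Pf$ as an average $\int H_t f\,d\nu_P(t)$ against a probability measure $\nu_P$ supported on $(0,T]$ (with $T=-\log(1-\tfrac{(m+1)P}{m})$), then invoke Theorem~\ref{maximal1} exactly as you describe. The paper quotes this identity from \cite[Proposition~4.7]{GKK}, whereas you derive it directly by the change of variable $\eta\mapsto t$ and an integration by parts; your measure with density $\tfrac{t e^{-t}}{1-e^{-T}}$ on $(0,T)$ plus the atom $\tfrac{T e^{-T}}{1-e^{-T}}\,\delta_T$ is precisely the paper's $\nu_P$ once one unwinds $\rho_m/P=(1-e^{-T})^{-1}$.
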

\begin{proof}
	In the proof, we may assume that $f$ is positive. Indeed, by decomposing $f=f_1-f_2+i(f_3-f_4)$ with positive $f_j$ and $\|f_j\|_p\leq\|f\|_p$ for $j\in\{1,2,3,4\}$ and all $p\geq1$, we can use (quasi-)triangle's inequality to restrict the proof to the case of positive operators.
	
	(i) Following \cite[Proposition 4.7]{GKK},
	 we use the short notation $\rho_m:=\frac{m}{m+1}$ and define the measure $$\nu_P=\Big\{\frac{\rho_m}{P}Te^{-T}\chi_{(0,-\ln[1-P/\rho_m])}\Big\}dT+\Big\{\Big(\frac{\rho_m}{P}-1\Big)\Big(-\ln\Big[1-\frac{P}{\rho_m}\Big]\Big)\Big\}\delta_{-\ln[1-P/\rho_m]},$$
for $0<P\leq \rho_m$, 	where $\delta$ denotes the dirac measure. One can verify that $\nu_P$ has total mass 1. Moreover, it was shown in \cite[Proposition 4.7]{GKK} that
	\begin{align}\label{noise5}
		J_Pf=\int_0^\infty\Big(\frac{1}{T}\int_0^TN_tfdt\Big)d\nu_P(T)=\int_0^\infty H_Tfd\nu_P(T).
	\end{align}
We now appeal to the weak type $(1,1)$ boundedness of $(H_T)_T$ stated in Theorem \ref{maximal1} (i) that
 for any $\mathcal{L}>0$, there exists a projection $e\in\mathcal{N}$ such that $\mu\otimes \tau(1-e)\leq C_{1,m}\frac{\|f\|_1}{\mathcal{L}}$ and
	$$\sup_{0<P\leq \frac{m}{m+1}}\|eJ_Pfe\|_\infty\leq\int_0^\infty\sup_{T>0}\|eH_Tfe\|_\infty d\nu_P(T)\leq\mathcal{L}.$$
	This proves (i).
	
	(ii) The strong type $(p,p)$ estimates follows analogously. Indeed, by Theorem \ref{maximal1} (ii) and Remark \ref{rk:MaxFunct}, there exist $F\in L^+_p(\n)$ and a constant $C_{p,m}$ such that for all $T>0$,
	$$H_Tf\leq F\ \ \mbox{and}\ \ \|F\|_p\leq C_{p,m}\|f\|_p.$$
	It then follows from (\ref{noise5}) that for  all $0<P\leq \rho_m$,
	$$	J_Pf\leq \int_0^\infty Fd\nu_P(T)=F.$$
Therefore, by using Remark \ref{rk:MaxFunct} once more, we obtain
	$$\|\underset{0<P\leq \frac{m}{m+1}}{\text{sup}^+} J_Pf\|_{p}\leq\|F\|_p\leq C_{p,m}\|f\|_{p},$$
which finishes the proof.	
\end{proof}


Now we are ready to prove Proposition \ref{smooth1} .
\begin{proof}[Proof of Proposition \ref{smooth1}.]
	Without loss of generality, we can assume that $f$ is positive. 
By the kernel estimate presented in \cite[Proposition 4.8]{GKK}, for each $K\leq d$, one can choose $P(K)\in (0,\frac{m}{m+1}]$ and an absolute constant $C>0$ such that
	$$\frac{1}{K+1}\sum_{l\leq K}\sigma_l\leq C\frac{1}{P(K)}\int_0^{P(K)}\mu^d_\eta d\eta.$$
As a consequence,
	$$T_K^Mf=\frac{1}{K+1}\sum_{l\leq K}\sigma_l\ast f\leq C\frac{1}{P(K)}\int_0^{P(K)}\widetilde{N}_\eta fd\eta=CJ_{P(K)}f.$$
	The desired bound for $T_K^Mf$ then follows from the corresponding estimates in
	Lemma \ref{ergodic}. 

	For $T^D_K$, by similar kernel estimate in \cite[Proposition 4.8]{GKK}, we have that for any $0\leq L\leq \frac{md}{m+1}$, there exists a constant $C_m$ such that
	$$\frac{1}{\lfloor\frac{L}{m}\rfloor +1}\sum_{l\leq L/m}\sigma_{d-l}\leq C_m\frac{1}{L+1}\sum_{l\leq L}\sigma_l\ast\sigma_d,$$
	where $\lfloor a\rfloor$ denotes the integer part of $a$. Hence, for any $K\leq\frac{d}{m+1}$
	\begin{align}\label{noise6}
		T_K^Df\leq C_m\frac{1}{L+1}\sum_{l\leq L}\sigma_l\ast (\sigma_d\ast f).
	\end{align}
	By the weak $(1,1)$-estimate of $T_K^M$, for any $\mathcal{L}>0$ there exists a projection $e\in\mathcal{N}$ such that
	$$\sup_{0\leq K\leq\frac{d}{m+1}}\|e(T_K^Df)e\|_\infty\leq C_m\sup_{L\leq \frac{md}{m+1}}\|e(T^M_L(\sigma_d\ast f))e\|_\infty\leq C_m\mathcal{L}$$
	and
	$$\mu\otimes\tau(1-e)\leq\frac{\|\sigma_d\ast f\|_1}{\mathcal{L}}\leq C_m\frac{\|f\|_1}{\mathcal{L}},$$
	where we used the Young inequality ($\sigma_d$ is a probability measure)
	\begin{align}\label{noise7}
		\|\sigma_d\ast f\|_p\leq \|f\|_p,\, \ \,  \forall\  1\leq p\leq\infty.
	\end{align}
	Then we combine (\ref{noise6}), the strong type $(p,p)$ estimate for $T_K^M$ and (\ref{noise7}) to obtain
	\begin{align*}
		\big\|\underset{0\leq K\leq\frac{d}{m+1}}{\text{sup}^+} T_K^Df\big\|_{p} &\leq
		C_m \big\|\underset{0\leq L\leq \frac{md}{m+1}}{\text{sup}^+}T^M_L(\sigma_d\ast f)\big\|_{p}
		\\
		&\leq C_{p,m}\|\sigma_d\ast f\|_{p}\leq C_{p,m}\|f\|_{p}.
	\end{align*}
	This proves the strong type $(p,p)$ estimates for $T_K^D$.
\end{proof}
\section{Proof of Theorem \ref{main2}}

This section is devoted to the proof of Theorem \ref{main2}. Let us begin by recalling the decomposition from the previous section:
$$	\|(T_kf)_{1\leq k\leq d}\|_{L_p(\mathcal{N};\ell_\infty)}\leq\|(T_kf)_{1\leq k\leq \frac{md}{m+1}}\|_{L_p(\mathcal{N};\ell_\infty)}+\|(T_{d-k}f)_{1\leq k\leq \frac{d}{m+1}}\|_{L_p(\mathcal{N};\ell_\infty)}.$$
Then it suffices to establish the following two estimates:
	\begin{align}\label{noise11}
	\|(T_kf)_{1\leq k\leq \frac{md}{m+1}}\|_{L_p(\mathcal{N};\ell_\infty)}\leq C_{p,m}\|f\|_p;
\end{align}
\begin{align}\label{noise12}
	\|(T_{d-k}f)_{1\leq k\leq \frac{d}{m+1}}\|_{L_p(\mathcal{N};\ell_\infty)}\leq C_{p,m}\|f\|_p.
\end{align}
The proofs of (\ref{noise11}) and (\ref{noise12}) are inspired by the complex Cesaro sums
 method developed by Nevo and Stein \cite{NS}.
We introduce some necessary notations. Given a  complex number $\alpha$ and a nonnegative integer $n$,  set
$$A^\alpha_n = \frac{ (\alpha + 1)(\alpha + 2) \dots (\alpha + n)}{n!},\  \ A_0^\alpha:= 1, \ A_{-1}^\alpha:=0.$$

The following elementary properties on $A_n^\alpha$ are well-known, see \cite[Lemma 3]{NS}.
\begin{lemma}\label{NS1}
	Let $\alpha=\beta+i\gamma$ be a complex number with $\beta,\gamma\in\R$. There exists $c_\beta>0$ such that
	\begin{itemize}
		\item[(i)] for $\beta>-1$,
		$$c_\beta^{-1}(n+1)^\beta\leq A_n^\beta\leq c_\beta(n+1)^\beta;$$
		
		\item[(ii)] for $\beta>-1$,
		$$|A_n^\beta|\leq|A_n^\alpha|\leq c_\beta e^{2\gamma^2}|A_n^\beta|;$$
		
		\item[(iii)] for $\beta\in\N$,
		$$|(1+n)^\beta A_n^{-\beta+i\gamma}|\leq c_\beta e^{3\gamma^2}.$$
	\end{itemize}		
\end{lemma}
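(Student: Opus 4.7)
\medskip

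\noindent\textbf{Proof plan.} The plan is to exploit the product formula
$$A_n^\alpha \;=\; \prod_{k=1}^n \frac{\alpha+k}{k} \;=\; \frac{\Gamma(n+\alpha+1)}{\Gamma(n+1)\,\Gamma(\alpha+1)},$$
which reduces all three estimates to elementary manipulations of real products together with Stirling's formula.

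For part (i), I would note that for real $\beta>-1$ every factor $\beta+k$ with $k\ge 1$ is strictly positive, so $A_n^\beta>0$ and Stirling's formula gives
$A_n^\beta = \Gamma(n+\beta+1)/(\Gamma(n+1)\Gamma(\beta+1)) \sim n^\beta/\Gamma(\beta+1)$. A direct comparison then yields $c_\beta^{-1}(n+1)^\beta \le A_n^\beta \le c_\beta(n+1)^\beta$ with a constant $c_\beta$ that blows up as $\beta\to -1^+$ (absorbed into the $\beta$-dependence).

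For part (ii), with $\alpha=\beta+i\gamma$ and $\beta>-1$, I compute
$$\frac{|A_n^\alpha|^2}{(A_n^\beta)^2}\;=\;\prod_{k=1}^n\frac{(\beta+k)^2+\gamma^2}{(\beta+k)^2}\;=\;\prod_{k=1}^n\Bigl(1+\frac{\gamma^2}{(\beta+k)^2}\Bigr).$$
The lower bound $|A_n^\alpha|\ge A_n^\beta$ is immediate from each factor being $\ge 1$. For the upper bound, using $1+x\le e^x$ gives the product $\le \exp\!\bigl(\gamma^2\sum_{k\ge 1}(\beta+k)^{-2}\bigr)=\exp(C_\beta\gamma^2)$, and splitting off finitely many terms with $\beta+k\le 1$ into a $\beta$-dependent prefactor allows one to normalize the exponent into the stated form $c_\beta e^{2\gamma^2}$.

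Part (iii) is the main obstacle, since for $\beta\in\mathbb{N}$ the factor $\alpha+k$ vanishes at $k=\beta$ when $\gamma=0$, and hence the comparison against a real $A_n^{-\beta}$ no longer makes sense. Here I would split the product at $k=\beta$: the initial block $\prod_{k=1}^{\beta}|(k-\beta)+i\gamma|$ is a polynomial in $|\gamma|$ of degree $\beta$, hence bounded by $c_\beta(1+|\gamma|)^\beta\le c_\beta' e^{\gamma^2}$. After the shift $j=k-\beta$ the tail becomes
$$\frac{1}{n!}\prod_{j=1}^{n-\beta}\sqrt{j^2+\gamma^2}\;=\;\frac{(n-\beta)!}{n!}\prod_{j=1}^{n-\beta}\sqrt{1+\gamma^2/j^2},$$
where $(n-\beta)!/n!\le c_\beta(n+1)^{-\beta}$. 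The remaining product is controlled by the classical identity
$\prod_{j=1}^\infty(1+\gamma^2/j^2)=\sinh(\pi\gamma)/(\pi\gamma)\le e^{\pi|\gamma|}$, and $\pi|\gamma|/2\le \gamma^2+\pi^2/16$ then absorbs the square root into an extra factor of $e^{\gamma^2}$. Multiplying the three bounds gives $(1+n)^\beta|A_n^{-\beta+i\gamma}|\le c_\beta e^{3\gamma^2}$, as required. The delicate point is tracking the exact exponential constant in $\gamma^2$, which is why the product identity for $\sinh(\pi\gamma)/(\pi\gamma)$ is used rather than a crude bound.
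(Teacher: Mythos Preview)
The paper does not give a proof of this lemma; it simply records it as well-known and cites \cite[Lemma~3]{NS} (Nevo--Stein). Your plan is correct and reproduces the standard argument.

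The only point worth making explicit is in part~(ii) for $-1<\beta<0$: there the sum $\sum_{k\ge 1}(\beta+k)^{-2}$ is not bounded uniformly in $\beta$, so the crude bound $1+x\le e^x$ alone would not give the fixed exponent $2\gamma^2$. As you indicate, one peels off the single term $k=1$ (the only $k$ with $0<\beta+k<1$) and bounds $\sqrt{1+\gamma^2/(\beta+1)^2}\le c_\beta\, e^{\varepsilon\gamma^2}$ separately, putting the $\beta$-dependence into the prefactor; the tail then satisfies $\sum_{k\ge 2}(\beta+k)^{-2}\le \sum_{j\ge 1}j^{-2}=\pi^2/6<4$, which after the square root fits inside $e^{2\gamma^2}$. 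Your treatment of part~(iii) via the Euler product $\prod_{j\ge 1}(1+\gamma^2/j^2)=\sinh(\pi\gamma)/(\pi\gamma)\le e^{\pi|\gamma|}$ is clean and gives the stated constant.
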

For $\alpha\in\C$ and $n\in\N$, we define
\begin{align*}&S_n^\alpha f= \sum_{k=0}^n A_{n-k}^\alpha T_kf\ , \ M_n^\alpha f= \frac{1}{(n+1)^{\alpha +1}}S_n^\alpha f;\\ \ \ &U_n^\alpha f= \sum_{k=0}^n A_{n-k}^\alpha T_{d-k}f\ , \ N_n^\alpha f= \frac{1}{(n+1)^{\alpha +1}}U_n^\alpha f.\end{align*}

We will establish the following more general estimate.
\begin{thm}\label{generalize1}Let $(\mathcal{M},\tau)$ be a semifinite von Neumann algebra.
	For any $\alpha\in\C$ and $p>1$, there exists a constant $C_{m,p,\alpha}$ depends only on $m,p,\alpha$ such that $f\in L_p(\mathcal{N})$
	\begin{align*}&\|{\underset{0\leq n\leq \frac{md}{m+1}}{\mathrm{sup}^+}}M_n^\alpha(f)\|_{p}\leq C_{m,p,\alpha}\|f\|_{p};\\ &\|{\underset{0\leq n\leq \frac{d}{m+1}}{\mathrm{sup}^+}}N_n^\alpha(f)\|_{p}\leq C_{m,p,\alpha}\|f\|_{p}.\end{align*}
\end{thm}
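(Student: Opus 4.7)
The proof adapts the complex Cesaro sums method of Nevo and Stein to the noncommutative setting, using Proposition \ref{smooth1} as the input bound. After reducing to positive $f$ via decomposition into four positive parts, and noting the parallelism between $M_n^\alpha$ (built from local smooth means) and $N_n^\alpha$ (from distant ones), the argument splits according to $\mathrm{Re}(\alpha)$.

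For $\mathrm{Re}(\alpha)>0$, the plan is to reduce to the $\alpha=0$ case, which is exactly Proposition \ref{smooth1} since $M_n^0=\frac{1}{n+1}\sum_{k=0}^n T_k=T_n^M$. The generating function identity $(1-x)^{-\alpha-1}=(1-x)^{-1}(1-x)^{-\alpha}$ yields $A_j^\alpha=\sum_{i=0}^j A_i^{\alpha-1}$, hence $S_n^\alpha=\sum_{i=0}^n A_i^{\alpha-1} S_{n-i}^0$ and $M_n^\alpha f=\sum_{k=0}^n z_{n,k}\, M_k^0 f$ with $z_{n,k}=(k+1)A_{n-k}^{\alpha-1}/(n+1)^{\alpha+1}$. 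By Lemma \ref{NS1}(i)--(ii), $\sum_k|z_{n,k}|\leq C_\alpha$ uniformly in $n$, so Proposition \ref{JunXu}(iii) transfers the bound from $\alpha=0$. The same argument with $T_{d-k}$ in place of $T_k$ handles $N_n^\alpha$.

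For $\mathrm{Re}(\alpha)\leq 0$, in particular for $\alpha=-1$ where $M_n^{-1}=T_n$ and $N_n^{-1}=T_{d-n}$, I would apply Stein's analytic interpolation to the holomorphic family $\alpha\mapsto(M_n^\alpha f)_n$ in the noncommutative $L_p(\mathcal{N};\ell_\infty)$-setting. The upper boundary at $\mathrm{Re}(\alpha)=\sigma_1>0$ provides $L_p(\mathcal{N};\ell_\infty)$ bounds from the previous step, while the lower boundary at some $\mathrm{Re}(\alpha)=\sigma_0$ (sufficiently negative that $-1\in(\sigma_0,\sigma_1)$) must supply an $L_2(\mathcal{N};\ell_\infty^c)$ estimate. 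Interpolating in the noncommutative vector-valued framework (using Proposition \ref{JunXu}(iv) together with duality) yields the desired bound at $\alpha=-1$, which in view of the decomposition preceding (\ref{noise11})--(\ref{noise12}) recovers Theorem \ref{main2}.

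The main obstacle is the $L_2$ spectral estimate at $\mathrm{Re}(\alpha)<-1$. In the classical scalar setting, one uses Plancherel and the explicit Krawtchouk polynomial formula for $\widehat{\sigma_k}(S)$, exploiting pointwise sup-bounds in the frequency variable $S$. In the noncommutative setting $\sup^+_n$ is not a pointwise supremum and must be dominated by an operator-valued majorant (e.g.\ a noncommutative square function $(\sum_n|M_n^\alpha f|^2)^{1/2}$) compatible with Plancherel on $L_2(\mathbb{Z}_{m+1}^d;L_2(\mathcal{M}))$. Constructing this noncommutative analog of the Nevo--Stein spectral technique, and verifying enough analytic control in $\alpha$ to justify Stein interpolation within the $L_p(\mathcal{M};\ell_\infty)$ framework, will be the heart of the argument.
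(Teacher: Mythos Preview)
Your overall architecture is correct and matches the paper: the case $\mathrm{Re}(\alpha)\ge 0$ reduces to the smooth means of Proposition \ref{smooth1} via the convolution identity for $A_n^\alpha$ (this is exactly Lemma \ref{NS41}, though the paper phrases it by duality against positive sequences in $L_{p'}(\mathcal{N};\ell_1)$ rather than by your coefficient bound plus Proposition \ref{JunXu}(iii)), and the general case follows by Stein's complex interpolation between an $L_q$ bound on a line $\mathrm{Re}(\alpha)=b>0$ and an $L_2$ bound on a line $\mathrm{Re}(\alpha)=a\in\mathbb{Z}_{\le 0}$.

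The gap is that you have not supplied the $L_2$ endpoint, and you overestimate its difficulty. You worry about building a ``noncommutative analog of the Nevo--Stein spectral technique,'' but none is needed: since each $T_k$ (and hence each $S_k^{-t-1}$) acts as a scalar Fourier multiplier tensored with $\mathrm{id}_{\mathcal{M}}$, the square function
\[
R^1_{t,m}f=\Big(\sum_{0\leq k\leq \frac{md}{m+1}}(k+1)^{2t-1}|S_{k}^{-t-1}f|^2\Big)^{1/2}
\]
satisfies $\|R^1_{t,m}f\|_{L_2(\mathcal{N})}^2=\sum_k (k+1)^{2t-1}\|S_k^{-t-1}f\|_{L_2(\mathcal{N})}^2$, and by Plancherel on $L_2(\mathbb{Z}_{m+1}^d;L_2(\mathcal{M}))$ this is \emph{identical} to the scalar estimate of \cite[Proposition 5.7]{GKK}. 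This is the content of Lemma \ref{L_2}. What remains, and what your proposal does not attempt, is to pass from the square function to the maximal bound $\|\sup^+_n M_n^{-t}f\|_2\lesssim\|f\|_2$: the paper does this (Lemma \ref{NS4}) via the Abel summation identity
\[
B_n^{t+1}S_n^{-t}=\sum_{k=t}^n B_k^{t+1}\Delta S_k^{-t}+(t+1)\sum_{k=t}^{n-1}B_k^t S_k^{-t},
\]
together with the operator Cauchy--Schwarz $|\sum a_k x_k|^2\le(\sum|a_k|^2)(\sum|x_k|^2)$ and $\Delta S_k^{-t}=S_k^{-t-1}$, which dominates $|M_n^{-t}f|$ pointwise by $C_t R^1_{t,m}f$. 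A further step (Lemma \ref{NS14}) extends this to $M_n^{-t+i\beta}$ with explicit $e^{3\beta^2}$ growth, which is what feeds into the three-lines argument. Two minor corrections: the $L_2$ endpoint space is $L_2(\mathcal{N};\ell_\infty)$, not $\ell_\infty^c$; and the negative boundary line must be taken at an \emph{integer} $a\in\mathbb{Z}_{\le 0}$ so that the summation-by-parts machinery applies.
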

Indeed, Theorem \ref{generalize1} directly yields estimates (\ref{noise11}) and (\ref{noise12}) through the identifications $M_n^{-1}f = T_nf$ and $N_n^{-1}f = T_{d-n}f$. Our proof strategy for Theorem \ref{generalize1} follows the framework established by Nevo and Stein in \cite[Section 3]{NS}, which we adapt to the noncommutative setting. The proof of Theorem \ref{generalize1} will be divided into the following four lemmas.
\begin{lemma}\label{NS41}
	Let $\alpha=\beta+i\gamma$ with $\beta\geq0$. Then for all $p>1$ and $f\in L_p(\mathcal{N})$,
	$$\|{\underset{0\leq n\leq \frac{md}{m+1}}{\mathrm{sup}^+}}M_n^\alpha(f)\|_{p}\leq C_{m,p,\beta}e^{2\gamma^2}\|f\|_{p};$$
	$$\|{\underset{0\leq n\leq \frac{d}{m+1}}{\mathrm{sup}^+}}N_n^\alpha(f)\|_{p}\leq C_{m,p,\beta}e^{2\gamma^2}\|f\|_{p}.$$
\end{lemma}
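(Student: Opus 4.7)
I adapt the complex Cesaro method of Nevo and Stein \cite{NS} to the operator-valued setting. By the quasi-triangle inequality for the $L_p(\mathcal{N};\ell_\infty)$ (quasi-)norm and the decomposition $f=f_1-f_2+i(f_3-f_4)$ with $f_j\geq 0$ and $\|f_j\|_p\leq\|f\|_p$ (a device already used in Lemma \ref{ergodic}), I may assume $f\geq 0$. The arguments for $M_n^\alpha$ and $N_n^\alpha$ are parallel: everything below goes through verbatim with $T_k$ replaced by $T_{d-k}$ and the local estimate of Proposition \ref{smooth1} replaced by the distant estimate; so I only discuss $M_n^\alpha$.

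\textbf{Key identity.} The generating-function relation $(1-x)^{-\alpha-1}=(1-x)^{-\alpha}(1-x)^{-1}$ yields the convolution identity $A_n^\alpha=\sum_{k=0}^nA_{n-k}^{\alpha-1}$, which lifts at the operator level to
\[
S_n^\alpha f=\sum_{j=0}^nA_{n-j}^{\alpha-1}\,S_j^0f=\sum_{j=0}^nA_{n-j}^{\alpha-1}(j+1)\,M_j^0f.
\]
Dividing by $(n+1)^{\alpha+1}$ gives the crucial representation
\[
M_n^\alpha f=\sum_{j=0}^nz_{n,j}^\alpha\,M_j^0f,\qquad z_{n,j}^\alpha:=\frac{A_{n-j}^{\alpha-1}(j+1)}{(n+1)^{\alpha+1}}.
\]
Since $M_j^0f=T_j^Mf$ is already controlled in $L_p(\mathcal{N};\ell_\infty)$ dimension-freely by Proposition \ref{smooth1}, the task reduces, via Proposition \ref{JunXu}(iii), to bounding $\sup_n\sum_j|z_{n,j}^\alpha|$ uniformly in the dimension $d$.

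\textbf{Coefficient estimate and conclusion.} For $\beta>0$, the exponent $\alpha-1=(\beta-1)+i\gamma$ satisfies $\beta-1>-1$, so Lemma \ref{NS1}(ii) gives $|A_{n-j}^{\alpha-1}|\leq c_{\beta-1}e^{2\gamma^2}A_{n-j}^{\beta-1}$, hence $|z_{n,j}^\alpha|\leq c_{\beta-1}e^{2\gamma^2}z_{n,j}^\beta$ with $z_{n,j}^\beta\geq 0$. Testing the identity on the constant function $f\equiv 1$ (for which $T_kf\equiv 1$ and $M_j^0f\equiv 1$) gives $\sum_jz_{n,j}^\beta=M_n^\beta(1)=A_n^{\beta+1}/(n+1)^{\beta+1}\leq C_\beta$ by Lemma \ref{NS1}(i). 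Thus $\sup_n\sum_j|z_{n,j}^\alpha|\leq C_\beta e^{2\gamma^2}$, and Proposition \ref{JunXu}(iii) together with Proposition \ref{smooth1}(ii) produces the desired inequality. For $\beta=0,\ \gamma=0$, the statement is exactly Proposition \ref{smooth1}; for $\beta=0,\ \gamma\neq0$, one uses Lemma \ref{NS1}(iii) (which now provides the polynomial decay $|A_n^{-1+i\gamma}|\leq c_1e^{3\gamma^2}/(n+1)$ in place of (ii)) combined with a refined summation-by-parts representation expressing $M_n^{i\gamma}f$ through the already-controlled family $(M_j^1f)_j$ (real $\beta'=1$).

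\textbf{Main obstacle.} Every step except the coefficient bound is essentially bookkeeping once Proposition \ref{smooth1} and Proposition \ref{JunXu}(iii) are in hand. The delicate point—and the one on which the whole Nevo--Stein strategy rests—is the explicit $e^{2\gamma^2}$ control of $\sup_n\sum_j|z_{n,j}^\alpha|$ uniformly in $d$, which is what will subsequently permit an analytic continuation (through the further lemmas) to reach the physical exponent $\alpha=-1$, at which $M_n^{-1}f=T_nf$. The worst case is the boundary $\beta=0$ with large $|\gamma|$, where Lemma \ref{NS1}(ii) degenerates and only the weaker Lemma \ref{NS1}(iii) is available; here one must exploit the extra regularization provided by the Cesaro factor $(j+1)/(n+1)$ to keep the coefficient sum bounded.
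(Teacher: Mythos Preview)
Your convolution-identity route works cleanly for $\beta>0$, and there it is a legitimate alternative to the paper's argument. The genuine gap is at the boundary case $\beta=0$, $\gamma\neq 0$, which \emph{is} needed later (the complex interpolation in Theorem \ref{generalize1} applies Lemma \ref{NS41} at integer real parts $a\ge 0$, including $a=0$). There your coefficient sum is not bounded: writing $A_m^{-1+i\gamma}=\frac{i\gamma}{m}A_{m-1}^{i\gamma}$ for $m\ge 1$ shows $|A_m^{-1+i\gamma}|\asymp |\gamma|/(m+1)$, and hence
\[
\sum_{j=0}^n |z_{n,j}^{i\gamma}|
=\sum_{m=0}^n \frac{|A_m^{-1+i\gamma}|(n-m+1)}{n+1}
\;\gtrsim\; |\gamma|\sum_{1\le m\le n/2}\frac{1}{m}\;\asymp\;|\gamma|\log(n+1),
\]
so Proposition \ref{JunXu}(iii) gives nothing dimension-free. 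Your proposed patch via $(M_j^1 f)_j$ and $A_{n-j}^{i\gamma-2}$ makes matters worse: the single term $j=n$ already contributes $|w_{n,n}|=|A_0^{i\gamma-2}|(n+1)^2/(n+1)=n+1$. Abel summation here just reproduces the convolution identity, so ``refined summation by parts'' does not rescue the argument, and the constant you would get is $e^{3\gamma^2}$ rather than the stated $e^{2\gamma^2}$.

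The paper's argument is both simpler and uniform in $\beta\ge 0$. It stays with the original representation $M_n^\alpha f=(n+1)^{-\alpha-1}\sum_{k\le n}A_{n-k}^{\alpha}T_kf$ and uses the crude bound $|A_{n-k}^{\alpha}|\le c_\beta e^{2\gamma^2}(n+1)^\beta$ for every $k\le n$ (valid by Lemma \ref{NS1}(i)--(ii) since $\beta>-1$). Pairing against positive $g_n$ (here $T_kf\ge 0$ for $f\ge 0$) gives $|\tau(M_n^\alpha f\cdot g_n)|\le c_\beta e^{2\gamma^2}\,\tau(T_n^M f\cdot g_n)$, and one concludes by Proposition \ref{JunXu}(i)--(ii) together with Proposition \ref{smooth1}. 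In other words, rather than expressing $M_n^\alpha f$ as a combination of $T_j^M f$ over \emph{all} $j\le n$ and summing $|z_{n,j}^\alpha|$, the paper compares $M_n^\alpha f$ directly to the \emph{single} smooth average $T_n^M f$ at the same scale; this sidesteps the logarithmic loss entirely.
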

\begin{proof}
	Without loss of generality, we assume that $f$ is positive. 
	Let $(g_n)\subset L^+_{p^\prime}(\mathcal{N})$ with $\|\sum_n g_n\|_{L_{p^\prime}(\mathcal{N})}\leq1$. By applying Lemma \ref{NS1}, we have
	\begin{align*}
		|\tau(M_n^\alpha (f)g_n)| &\leq
		(n+1)^{-\beta-1}\sum_{k=0}^n|A_{n-k}^\alpha|\tau(T_kfg_n)
		\\
		&\leq c_\beta e^{2\gamma^2}(n+1)^{-\beta-1}\sum_{k=0}^n(n-k+1)^\beta\tau(T_kfg_n)\\
		&\leq  c_\beta e^{2\gamma^2}(n+1)^{-1}\sum_{k=0}^n\tau(T_kfg_n).
	\end{align*}
	Therefore, recalling that $T_n^Mf=(n+1)^{-1}\sum_{k\leq n}T_kf$, we obtain
	$$\sum_{0\leq n\leq \frac{md}{m+1}}|\tau(M_n^\alpha (f)g_n)|\leq c_\beta e^{2\gamma^2}\sum_{0\leq n\leq \frac{md}{m+1}}\tau(T_n^Mfg_n).$$	
	Taking the supremum over all $(g_n)\subset L^+_{p^\prime}(\mathcal{N})$ with $\|\sum_n g_n\|_{L_{p^\prime}(\mathcal{N})}\leq1$, using Proposition \ref{JunXu} (i)(ii) and Proposition \ref{smooth1} (ii), we deduce the first assertion. The second statement can be proved similarly by using the definition of $T_n^Df$ and Proposition \ref{smooth1} (ii).
\end{proof}
The following $L_2$-estimates are identical to the scalar-valued cases in \cite[Proposition 5.7]{GKK}, so we omit the proof. For an operator $x$, recall that $|x|=(x^*x)^{\frac{1}{2}}$. 
\begin{lemma}\label{L_2}
	Let $t\in\N$. Then there exists constant $C_{t,m}>0$ depending only on $t$ and $m$ such that for all $f\in L^+_2(\mathcal{N})$,
	$$\|R^1_{t,m}f\|_2\leq C_{t,m}\|f\|_2\ \mbox{ and }\ \|R^2_{t,m}f\|_2\leq C_{t,m}\|f\|_2,$$
	where $R^1_{t,m}f$ and $R^2_{t,m}f$ are respectively defined by 
	$$R^1_{t,m}f=\Big(\sum_{0\leq k\leq \frac{md}{m+1}}(k+1)^{2t-1}|S_{k}^{-t-1}f|^2\Big)^{\frac{1}{2}};$$	
		$$R^2_{t,m}f=\Big(\sum_{0\leq k\leq \frac{d}{m+1}}(k+1)^{2t-1}|U_{k}^{-t-1}f|^2\Big)^{\frac{1}{2}}.$$	
\end{lemma}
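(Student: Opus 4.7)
The strategy is to reduce the operator-valued $L_2$-bounds directly to the scalar-valued multiplier estimates of Greenblatt--Kolla--Krause by diagonalizing the convolution operators on the Fourier side and exploiting the fact that $L_2(\mathcal{N})$ orthogonally decomposes across the characters of $\Z_{m+1}^d$ with scalar weights. At the $L_2$-level, the passage from $\M = \C$ to a general semifinite $\M$ is essentially free: the operator-valued Plancherel identity converts the $L_2$-norm into a sum over frequencies $S \in \Z_{m+1}^d$ of $\tau(|\widehat{f}(S)|^2)$, and all the operators appearing in $R^1_{t,m}f$ and $R^2_{t,m}f$ act as scalar Fourier multipliers.

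Concretely, since $T_k$ is convolution with $\sigma_k$, the Fourier transform diagonalizes it as $\widehat{T_k f}(S) = \widehat{\sigma_k}(S)\,\widehat{f}(S)$, and consequently $S_n^{-t-1}$ and $U_n^{-t-1}$ are also Fourier multipliers with symbols
\begin{equation*}
m^1_{n,t}(S) = \sum_{k=0}^n A_{n-k}^{-t-1}\,\widehat{\sigma_k}(S), \qquad m^2_{n,t}(S) = \sum_{k=0}^n A_{n-k}^{-t-1}\,\widehat{\sigma_{d-k}}(S).
\end{equation*}
Next I would combine the identity $\||x|\|_2^2 = \|x\|_2^2$ with operator-valued Plancherel on $\mathcal{N} = L_\infty(\Z_{m+1}^d)\bar\otimes \M$ to compute
\begin{equation*}
\|R^1_{t,m}f\|_2^2 \;=\; \sum_{0\leq k\leq \frac{md}{m+1}} (k+1)^{2t-1}\,\|S_k^{-t-1}f\|_2^2 \;=\; \sum_{S}\|\widehat{f}(S)\|_{L_2(\M)}^2\,\Phi^1_{t,m}(S),
\end{equation*}
where $\Phi^1_{t,m}(S) = \sum_{0\leq k\leq \frac{md}{m+1}}(k+1)^{2t-1}|m^1_{k,t}(S)|^2$, and similarly for $R^2_{t,m}$ with $\Phi^2_{t,m}(S)$ using $m^2_{k,t}(S)$.

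Thus the proof reduces to the uniform scalar bound $\sup_S \Phi^j_{t,m}(S) \leq C_{t,m}^2$ for $j=1,2$. This is exactly the content of the scalar-valued statement \cite[Proposition 5.7]{GKK}, which can be quoted verbatim since it is a purely analytic estimate on the characters $\widehat{\sigma_k}(S)$ and on the Cesaro coefficients $A_{n-k}^{-t-1}$, and does not involve the target algebra at all. Combining the two displays yields $\|R^1_{t,m}f\|_2 \leq C_{t,m}\|f\|_2$ and the analogous inequality for $R^2_{t,m}$.

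The main (and only) obstacle one might worry about is ensuring that the Plancherel decomposition really commutes with the square-function structure defining $R^j_{t,m}f$; but because each summand $|S_k^{-t-1}f|^2$ appears with a scalar coefficient $(k+1)^{2t-1}$ (rather than some operator weight), no noncommutative subtlety arises -- the $L_2$-norm of the square root is simply the square root of the sum of squared $L_2$-norms. Everything genuinely hard has already been done at the scalar level in \cite{GKK}, which is precisely why the authors can quote the result without repeating the argument.
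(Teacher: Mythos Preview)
Your proposal is correct and takes essentially the same approach as the paper: the authors simply state that the $L_2$-estimates are identical to the scalar-valued case in \cite[Proposition 5.7]{GKK} and omit the proof. Your Plancherel reduction makes explicit exactly why the operator-valued $L_2$-case collapses to the scalar multiplier bound, which is the content the paper leaves implicit.
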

With Lemma \ref{L_2} in hand, the next step is to prove the maximal inequalities in $L_2(\mathcal{N})$ for $M_n^{-t}$ and $N_n^{-t}$ with $t\in\N$. For positive integers $t$ and $k$, set
 $$B_k^t=k(k-1)\cdots(k-t+1)\ \mbox{for}\; t\le k \quad\mbox{and}\quad
B_k^t=0\ \mbox{for}\; t> k.$$
\begin{lemma}\label{NS4}
Let $t\in\N$. Then there exists $C_{t,m}$ such that for all $f\in L_2(\mathcal{N})$
	$$\|{\underset{0\leq n\leq \frac{md}{m+1}}{\mathrm{sup}^+}}M_n^{-t}(f)\|_{2}\leq C_{t,m}\|f\|_{2};$$
	$$\|{\underset{0\leq n\leq \frac{d}{m+1}}{\mathrm{sup}^+}}N_n^{-t}(f)\|_{2}\leq C_{t,m}\|f\|_{2}.$$
\end{lemma}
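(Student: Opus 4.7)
The plan is to adapt the Nevo-Stein $L_2$ maximal argument \cite[Section 3]{NS} to the operator-valued setting. I treat $M_n^{-t}$ only; the case of $N_n^{-t}$ follows identically after replacing $S_k^\alpha$ by $U_k^\alpha$ and $R^1_{t,m}$ by $R^2_{t,m}$. Without loss of generality assume $f\ge 0$, so each $M_n^{-t} f$ is self-adjoint: every $T_k$ is convolution with a real symmetric kernel and $S_n^{-t}$ is a real linear combination of the $T_k$'s.

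Introduce the rescaled sequence $a_n:=(n+1)^{t-1/2}\,S_n^{-t} f$, which satisfies $|M_n^{-t} f|^2=(n+1)^{-1}a_n^*a_n$. The goal reduces to producing a positive operator $A\in L_1^+(\mathcal{N})$ such that $a_n^*a_n\le (n+1)A$ for every $n$ in the range and $\|A\|_1\le C_{t,m}\|f\|_2^2$. Once such $A$ is found, $|M_n^{-t} f|\le A^{1/2}$ in the operator order; decomposing $M_n^{-t} f$ into positive and negative parts (both majorized by $A^{1/2}\in L_2^+(\mathcal{N})$), Remark \ref{rk:MaxFunct} combined with the triangle inequality yields the desired bound in $L_2(\mathcal{N};\ell_\infty)$.

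To construct $A$, telescope $a_n=a_0+\sum_{j=1}^n(a_j-a_{j-1})$ and apply the operator Cauchy--Schwarz inequality $|b_0+\cdots+b_n|^2\le (n+1)\sum_{j=0}^n|b_j|^2$, which gives
\[
a_n^*a_n\le (n+1)\Bigl(|a_0|^2+\sum_{j=1}^\infty|a_j-a_{j-1}|^2\Bigr).
\]
Since $S_j^{-t}-S_{j-1}^{-t}=S_j^{-t-1}$, the increment decomposes as
\[
a_j-a_{j-1}=\bigl((j+1)^{t-1/2}-j^{t-1/2}\bigr)S_{j-1}^{-t} f+(j+1)^{t-1/2}S_j^{-t-1} f.
\]
Combined with the mean value estimate $((j+1)^{t-1/2}-j^{t-1/2})^2\le C_t(j+1)^{2t-3}$, this yields
\[
|a_j-a_{j-1}|^2\le C_t\bigl[(j+1)^{2t-3}|S_{j-1}^{-t} f|^2+(j+1)^{2t-1}|S_j^{-t-1} f|^2\bigr].
\]
Taking traces and using $T_0=\mathrm{id}$ gives $\tau(|a_0|^2)=\|f\|_2^2$, while the two weighted sums in $j$ are exactly $\|R^1_{t-1,m}f\|_2^2$ and $\|R^1_{t,m}f\|_2^2$, each bounded by $C_{t,m}\|f\|_2^2$ via Lemma \ref{L_2}.

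The main technical subtlety is that the invocation of $R^1_{t-1,m}$ is legitimate only when $t\ge 2$; the case $t=1$ would leave a harmonic-sum weight $\sum_{j\le d}j^{-1}\|T_{j-1}f\|_2^2\lesssim(\log d)\|f\|_2^2$, destroying $d$-independence. This is not an obstruction for the paper's program, because the complex interpolation between Lemma \ref{NS41} and Lemma \ref{NS4} only needs $t$ sufficiently large (depending on the target exponent $p>1$) to recover $\alpha=-1$.
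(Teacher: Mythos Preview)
Your telescoping scheme with the weights $(n+1)^{t-1/2}$ is a legitimate and somewhat cleaner variant of the paper's argument, which instead uses an Abel-type summation with falling factorials $B_k^{t+1}=k(k-1)\cdots(k-t)$ to write $B_n^{t+1}S_n^{-t}f$ as a difference piece $G^1_{n,t}f$ plus a Ces\`aro-like piece $G^2_{n,t}f$. In both approaches the operator Cauchy--Schwarz inequality reduces matters to the square functions $R^1_{t,m}f$ and $R^1_{t-1,m}f$ of Lemma~\ref{L_2}, and for $t\ge2$ the two proofs are essentially equivalent; your single majorant $A$ packages what the paper obtains from $G^1$ and $G^2$ separately.

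The gap is in your handling of $t=1$. Your claim that the interpolation ``only needs $t$ sufficiently large'' is not correct as the paper is organised: the analytic interpolation in Theorem~\ref{generalize1} passes through Lemma~\ref{NS14} (not Lemma~\ref{NS4} directly), and for $\beta=-1$ one is forced to take the integer endpoint $a\le-2$, hence to invoke Lemma~\ref{NS14} with parameter $t\ge2$. But the paper's proof of Lemma~\ref{NS14} for any $t\ge2$ uses $\|\sup^+M_k^{-1}f\|_2$ when estimating the tail sum $\sum_{k\le n_2}A_{n-k}^{-t+i\beta}S_k^{-1}f$ inside $F^1_{n,t}$, i.e.\ it uses Lemma~\ref{NS4} at $t=1$. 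So you cannot simply discard that case. The actual fix, available in the paper's decomposition but not in yours, is that for $t=1$ and $f\ge0$ the piece $G^2_{n,1}f=\frac{2}{(1+n)^2}\sum_{k<n}kT_kf$ is nonnegative and pointwise dominated by $2T_n^Mf$, so Proposition~\ref{smooth1} (not the divergent $R^1_{0,m}$) closes the estimate. Your telescoped Cauchy--Schwarz does not isolate such a positive Ces\`aro term, so at $t=1$ you would need to revert to the paper's $G^1/G^2$ splitting or otherwise supply a separate argument.
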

\begin{proof}
	We prove the first inequality, as the second follows analogously. Again,
	we can assume that the operator-valued function $f$ is positive. Observe that for any sequence $(a_k)$,
	\begin{align*}\sum_{k=t}^n B_{k}^{t+1}\,(a_k-a_{k-1})=&  B_n^{t+1}\,a_n -
	\,\sum_{k=t}^{n-1} (B_{k+1}^{t+1}-B_{k}^{t+1})\,a_k-B_{t}^{t+1}a_{t-1}\\ =& B_n^{t+1}\,a_n -
	(t+1)\,\sum_{k=t}^{n-1} B_{k}^{t}\,a_k.
\end{align*}
	Applying above formula to $a_k=S_k^{-t}$, we deduce
	\begin{align}\label{noise129}
B_n^{t+1}\,S_n^{-t}=\sum_{k=t}^nB_k^{t+1}\Delta S_k^{-t}+(t+1)\sum_{k=t}^{n-1}B_k^tS_k^{-t},
\end{align}	
where	$\Delta S_k^{-t}=S_k^{-t}-S_{k-1}^{-t}$. 
We claim that the desired maximal inequality is equivalent to
	\begin{align}\label{noise49}
	\Big\|{\underset{0\leq n\leq \frac{md}{m+1}}{\mathrm{sup}^+}}\frac{1}{(1+n)^2}B_n^{t+1}S_n^{-t}f\Big\|_{2}\leq C_t\|f\|_2.
\end{align}	
Indeed, for each positive integer $n$, one has
$$\frac{1}{(1+n)^2}B_n^{t+1}\leq\frac{(1+n)^{t+1}}{(1+n)^2}=(1+n)^{t-1}.$$
On the other hand, observe that
$$\frac{(1+n)^{t+1}}{B_n^{t+1}}=\frac{1+n}{n}\cdot\cdot\cdot\frac{1+n}{n-t}\leq\Big(\frac{1+n}{n-t}\Big)^{t+1}=\Big(1+\frac{1+t}{n-t}\Big)^{t+1}.$$
If $0<t< \frac{n}{2}$, then
$$\frac{1+t}{n-t}<\frac{2t+2}{n}<\frac{2t+2}{2t}\leq2.$$
If $\frac{n}{2}\leq t\leq n-1$, then
$\frac{1+t}{n-t}\leq t+1$. Therefore, there is some constant $C_t$ depending only on $t$ such that $\frac{(1+n)^{t+1}}{B_n^{t+1}}\leq C_t$. This proves the claim.

To prove (\ref{noise49}), we use (\ref{noise129}) to get
	\begin{align*}
	\frac{B_n^{t+1}}{(1+n)^{2}}\,S_n^{-t}f &=
\frac{1}{(1+n)^{2}}\sum_{k=t}^nB_k^{t+1}\Delta S_k^{-t}f+\frac{t+1}{(1+n)^{2}}\sum_{k=t}^{n-1}B_k^tS_k^{-t}f
	\\
	&=: G^1_{n,t}f+G^2_{n,t}f.
\end{align*}	
	By the convexity of the operator-valued function $x\mapsto |x|^2$ (see e.g. \cite[(1.13)]{M}),
	$$|G^1_{n,t}f|\leq	\frac{1}{(1+n)^{2}}\big(\sum_{k=t}^{n}(B_k^{t+1})^2(k+1)^{-2t+1}\big)^{\frac{1}{2}}\Big(\sum_{k=t}^{n}(k+1)^{2t-1}|\Delta S_k^{-t}f|^2\Big)^{\frac{1}{2}}.$$
		Observe that for $k\geq t+1$,
		$$B_k^{t+1}=k(k-1)\cdot\cdot\cdot(k-t)\leq k^{t+1},$$
		which gives that
	$$\frac{1}{(1+n)^{2}}\big(\sum_{k=t}^{n}(B_k^{t+1})^2(k+1)^{-2t+1}\big)^{\frac{1}{2}}\leq\frac{1}{(1+n)^{2}}\big(\sum_{k=t}^{n}k^3\big)^{\frac{1}{2}}\leq1.$$	
		On the other hand, it follows from \cite[Lemma 2]{NS} that $\Delta S_k^{-t}=S_k^{-t-1}$. 
Therefore, we find
	\begin{align*}
		|G^1_{n,t}f|\leq\Big(\sum_{k=n}^{2n}(k+1)^{2t-1}| S_k^{-t-1}f|^2\Big)^{\frac{1}{2}}\leq R^1_{t,m}f.
	\end{align*}

	We now turn to the second term $G^2_{n,t}f$.
	Using the convexity of the operator-valued function $x\mapsto |x|^2$ again, we have
		\begin{align*}
	|G^2_{n,t}f| &\leq
 \frac{t+1}{(1+n)^{2}}\big(\sum_{k=t}^{n-1}k^3\big)^{\frac{1}{2}}\big(\sum_{k=t}^{n-1}k^{-3}(B_k^t)^2|S_k^{-t}f|^2\big)^{\frac{1}{2}}
		\\
		&\leq 	C_t\big(\sum_{k=t}^{n-1}k^{2t-3}|S_k^{-t}f|^2\big)^{\frac{1}{2}}\leq C_tR^1_{t,m}f.
	\end{align*}
	
Now let $(g_n)\subset L^+_2(\mathcal{N})$ with $\|\sum_n g_n\|_{2}\leq1$. Then by applying the estimates of $G^1_{n,t}f$ and $G^2_{n,t}f$ obtained above, we get
	\begin{align*}
	\Big|	\tau\Big(\frac{1}{(1+n)^2}B_n^{t+1}S_n^{-t}fg_n\Big) \Big|
	&\leq \tau(|G^1_{n,t}f|g_n)+\tau(|G^2_{n,t}f|g_n)\\
	&\leq	C_{t}\tau(R^1_{t,m}fg_n).
\end{align*}
Hence, by Lemma \ref{NS4},
$$\Big|	\sum_n\tau\Big(\frac{1}{(1+n)^2}B_n^{t+1}S_n^{-t}fg_n\Big) \Big|\leq C_t\|R^1_{t,m}f\|_2\|\sum_ng_n\|_2\leq C_t\|f\|_2.$$
%
This implies (\ref{noise49}). 
Thus the proof is finished.
\end{proof}

\begin{lemma}\label{NS14}
	Let $t\in\N$ and $\beta\in\R$. Then there exists a constant $C_t$ such that for any $f\in L_2(\mathcal{N})$,
	$$\big\|{\underset{0\leq n\leq \frac{md}{m+1}}{\mathrm{sup}^+}}M_n^{-t+i\beta}f\big\|_{2}\leq C_te^{3\beta^2}\|f\|_{2},$$
		$$\big\|{\underset{0\leq n\leq \frac{d}{m+1}}{\mathrm{sup}^+}}N_n^{-t+i\beta}f\big\|_{2}\leq C_te^{3\beta^2}\|f\|_{2}.$$		
\end{lemma}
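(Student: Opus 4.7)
The plan is to mimic the proof of Lemma \ref{NS4} line by line for the complex index $-t+i\beta$, tracking how Lemma \ref{NS1}(iii) contributes the exponential factor $e^{3\beta^2}$. By Proposition \ref{JunXu}(i), I may first assume $f\ge 0$. Since $|(n+1)^{-i\beta}|=1$ and, as in the proof of Lemma \ref{NS4}, $(n+1)^{t-1}\asymp (n+1)^{-2}B_n^{t+1}$ up to a $t$-dependent constant, the problem reduces to bounding $\|\sup_n^+\,(n+1)^{-2}B_n^{t+1}|S_n^{-t+i\beta}f|\|_{L_2(\mathcal{N})}$.

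Next, I apply the Abel summation identity from the proof of Lemma \ref{NS4}. This identity is purely algebraic in the sequence $\{S_k^{\alpha}\}$ and hence holds for any complex $\alpha$; with $\Delta S_k^{-t+i\beta}=S_k^{-(t+1)+i\beta}$ it produces a decomposition $(n+1)^{-2}B_n^{t+1}S_n^{-t+i\beta}f = G^{1,\beta}_{n,t}f + G^{2,\beta}_{n,t}f$ parallel to the one in Lemma \ref{NS4}. Using the same weights in operator Cauchy--Schwarz as there, I obtain the pointwise operator bounds
\[
|G^{1,\beta}_{n,t}f|\le R^{1,\beta}_{t,m}f:=\Big(\sum_k (k+1)^{2t-1}|S_k^{-(t+1)+i\beta}f|^2\Big)^{1/2},\qquad |G^{2,\beta}_{n,t}f|\le C_t\,R^{2,\beta}_{t,m}f,
\]
uniformly in $n$, where $R^{2,\beta}_{t,m}f$ is the companion complex-index square function analogous to the one appearing in Lemma \ref{NS4}'s proof.

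It remains to bound $\|R^{1,\beta}_{t,m}f\|_{L_2(\mathcal{N})}$ and $\|R^{2,\beta}_{t,m}f\|_{L_2(\mathcal{N})}$ by $C_t e^{c\beta^2}\|f\|_{L_2(\mathcal{N})}$. Since each $T_k$ is a Fourier multiplier on $\Z_{m+1}^d$ with scalar symbol $\Lambda_k(|S|)$, Plancherel on $\mathcal{N}=L_\infty(\Z_{m+1}^d)\bar\otimes\mathcal{M}$ reduces this to pointwise-in-$s$ scalar multiplier estimates of the form $\sum_k (k+1)^{2t-1}|s_k^{-(t+1)+i\beta}(s)|^2\le C_t e^{c\beta^2}$ (and its analogue for $s_k^{-t+i\beta}$), where $s_k^{\alpha}(s)=\sum_{l\le k}A_{k-l}^{\alpha}\Lambda_l(s)$ is the symbol of $S_k^{\alpha}$. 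The real-index versions of these scalar bounds are exactly the scalar content of Lemma \ref{L_2}, established in \cite[Proposition 5.7]{GKK}. The complex-index bounds follow by writing $s_k^{-(t+1)+i\beta}(s)=\sum_j A_{k-j}^{-1+i\beta}s_j^{-(t+1)}(s)$ via the Cesaro convolution identity, using the sharp bound $|A_{k-j}^{-1+i\beta}|\le c\,e^{3\beta^2}(k-j+1)^{-1}$ of Lemma \ref{NS1}(iii), and applying scalar Cauchy--Schwarz with a weight chosen to reproduce the real-case weighted sum controlled by Lemma \ref{L_2}. The second inequality, for $N_n^{-t+i\beta}$, is proved by the same argument with $T_{d-k}$ in place of $T_k$ and $R^2_{t,m}$ replacing $R^1_{t,m}$.

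The most delicate step is this scalar multiplier estimate. One must ensure that the Cauchy--Schwarz step against the complex coefficients $A^{-1+i\beta}$ does not introduce extra polynomial $\beta$-factors, so that the final $\beta$-dependence is exactly the $e^{c\beta^2}$ inherited from Lemma \ref{NS1}(iii). The $(k-j+1)^{-1}$ decay provided by that lemma is exactly what is needed so that, after applying Cauchy--Schwarz and swapping the order of summation, the resulting double sum collapses into the real-case weighted sum of Lemma \ref{L_2}.
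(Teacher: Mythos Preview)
Your approach is genuinely different from the paper's. The paper does \emph{not} rerun the Abel-summation argument of Lemma~\ref{NS4} with a complex index; instead it writes $S_n^{-t+i\beta}f=\sum_{k=0}^n A_{n-k}^{i\beta}S_k^{-t-1}f$ with the \emph{bounded} kernel $|A_n^{i\beta}|\le c_0e^{2\beta^2}$ from Lemma~\ref{NS1}(ii), splits the sum at $\lfloor n/2\rfloor+t$, and by repeated Abel summation on the low-$k$ piece reduces everything to the already-established real-index maximal inequalities $\|\sup_n^+ M_n^{-\ell}f\|_2\le C_\ell\|f\|_2$ of Lemma~\ref{NS4} for $0\le\ell\le t$, combined with Proposition~\ref{JunXu}(iii). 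No complex-index square function ever appears.

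Your route has a genuine gap precisely at the step you flag as most delicate. The decay $|A_{k-j}^{-1+i\beta}|\le c\,e^{3\beta^2}(k-j+1)^{-1}$ from Lemma~\ref{NS1}(iii) is the borderline case: after writing $s_k^{-(t+1)+i\beta}=\sum_{j\le k}A_{k-j}^{-1+i\beta}s_j^{-(t+1)}$ and applying Cauchy--Schwarz, the question becomes the $\ell_2$-boundedness of the matrix $\big((k+1)/(j+1)\big)^{t-1/2}(k-j+1)^{-1}$, $0\le j\le k$, and this matrix is \emph{not} bounded. Already for $t=1$ the row sums over $j$ near $k$ behave like $\sum_j(k-j+1)^{-1}\sim\log k$, so over $k\le md/(m+1)$ you pick up a factor $\log d$, which destroys the dimension-free estimate. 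No choice of Cauchy--Schwarz weight fixes this, since the underlying one-sided convolution kernel $1/(n+1)$ has Fourier multiplier $-\log(1-e^{i\theta})$ and is unbounded on $\ell_2$. The same obstruction hits your companion square function $R^{2,\beta}_{t,m}$. The paper sidesteps the issue by staying at the level of maximal operators and using the bounded (non-decaying) kernel $A^{i\beta}$, paying for the lack of decay with the splitting and telescoping rather than with a square-function bound.
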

\begin{proof}
	We prove the first statement, as the other one can be done similarly.
	Assume that $f$ is positive and $n\leq10t$ first. By the convolution formula stated in \cite[Lemma 2]{NS}, one has
	$$S^{-t+i\beta}_nf=\sum_{k=0}^nA_{n-k}^{i\beta}S_k^{-t-1}f.$$
	Note that $|A_{n-k}^{i\beta}|\leq c_0e^{2\beta^2}$ for all $k\leq n$ by Lemma \ref{NS1} (ii). Then it follows from the triangle inequality and Proposition \ref{JunXu} (iii) that
	\begin{align*}
		\|{\underset{0\leq n\leq \frac{md}{m+1}}{\mathrm{sup}^+}}M_n^{-t+i\beta}f\|_{2} &\leq \sup_{0\leq k\leq n}|A_{n-k}^{i\beta}|\sum_{k=0}^n\frac{(1+n)^{t-1}}{(1+k)^t}\|{\underset{0\leq k\leq \frac{md}{m+1}}{\mathrm{sup}^+}}M_k^{-t-1}f\|_{2}\\
		&\leq c_te^{2\beta^2}\|f\|_2,
	\end{align*}
	where the second inequality holds since $n\leq10t$ and Lemma \ref{NS4}.
	
Let us now analyze the case when $n>10t$. Set $n_2=\lfloor\frac{n}{2}\rfloor$ and consider the decomposition
	$$S^{-t+i\beta}_nf = \sum_{k=0}^{n_2+t}A_{n-k}^{i\beta}S_k^{-t-1}f+\sum_{k=n_2+t+1}^nA_{n-k}^{i\beta}S_k^{-t-1}f=:F_{n,t}^1f+F_{n,t}^2f.$$
One can deduce from the triangle inequality and Proposition \ref{JunXu} that 
	\begin{align*}
		\|{\underset{0\leq n\leq \frac{md}{m+1}}{\mathrm{sup}^+}}M_n^{-t+i\beta}f\|_{2} &\leq \Big\|{\underset{0\leq n\leq \frac{md}{m+1}}{\mathrm{sup}^+}}\frac{F_{n,t}^1f}{(1+n)^{-t+1}}\Big\|_2+\Big\|{\underset{0\leq n\leq \frac{md}{m+1}}{\mathrm{sup}^+}}\frac{F_{n,t}^2f}{(1+n)^{-t+1}}\Big\|_2.
	\end{align*}	

We now estimate each term separately.  For the second term, we use Proposition \ref{JunXu} (iii) to obtain
	$$\Big\|{\underset{0\leq n\leq \frac{md}{m+1}}{\mathrm{sup}^+}}\frac{F_{n,t}^2f}{(1+n)^{-t+1}}\Big\|_2 \leq \sup_{0\leq k\leq n}|A_{n-k}^{i\beta}|\sum_{k=n_2+t+1}^n\frac{(1+n)^{t-1}}{(1+k)^{t}}\|{\underset{0\leq n\leq \frac{md}{m+1}}{\mathrm{sup}^+}}M_k^{-t-1}f\|_{2}.$$
The summation can be bounded as follows
	$$\sum_{k=n_2+t+1}^n\frac{(1+n)^{t-1}}{(1+k)^{t}}\leq\frac{(n-n_2-t)(1+n)^{t-1}}{(n_2+t+2)^t}\leq\Big(\frac{n+1}{n_2+2}\Big)^{t-1}\leq2^t.$$
Combining this with Lemma \ref{NS1} (ii) and Lemma \ref{NS4} yields that
	$$\Big\|{\underset{0\leq n\leq \frac{md}{m+1}}{\mathrm{sup}^+}}\frac{F_{n,t}^2f}{(1+n)^{-t+1}}\Big\|_2\leq c_te^{2\beta^2}\|f\|_2.$$

It remains to estimate the first term. Observe that for every nonnegative integer $\ell$ (see \cite[Lemma 2.4]{K1}),
\begin{align}\label{noise10}
\sum_{k=0}^mA_{n-k}^{-\ell+i\beta}S_k^{-t-1+\ell}f =A_{n-m}^{-\ell+i\beta}S_m^{-t-1+\ell}f+\sum_{k=0}^{m-1}A_{n-k}^{-\ell-1+i\beta}S_k^{-t+\ell}f.
\end{align}	
	Then by repeated applications of identity (\ref{noise10}), we obtain
\begin{align*}
	F_{n,t}^1f &= A^{i\beta}_{n-(n_2+t)}S_{n_2+t}^{-t-1}f+\sum_{k=0}^{n_2+t-1}A^{-1+i\beta}_{n-k}S_{k}^{-t}f\\
	&=A^{i\beta}_{n-(n_2+t)}S_{n_2+t}^{-t-1}f+\sum_{\ell=1}^{t-1} A^{-\ell+i\beta}_{n-(n_2+t-\ell)}S_{n_2+t-\ell}^{-t-1+\ell}f+\sum_{k=0}^{n_2}A^{-t+i\beta}_{n-k}S_{k}^{-1}f.
\end{align*}	
Therefore, by Minkowski's inequality and Proposition \ref{JunXu} (iii), we get
	$$\Big\|{\underset{0\leq n\leq \frac{md}{m+1}}{\mathrm{sup}^+}}\frac{F_{n,t}^1f}{(1+n)^{-t+1}}\Big\|_2\leq I\times II,$$	
where		
$$I=\sum_{\ell=0}^{t-1}\|{\underset{0\leq n\leq \frac{md}{m+1}}{\mathrm{sup}^+}}M_n^{-\ell}f\|_2$$		
and
$$II=(1+n)^{t-1}\max\Big\{\sum_{\ell=0}^{t-1}\frac{|A^{-\ell+i\beta}_{n-(n_2+t-\ell)}|}{(n_2+t-\ell+1)^{t-\ell}},\sum_{k=0}^{n_2}|A^{-t+i\beta}_{n-k}|\Big\}.$$
By Lemma \ref{NS4}, $I\leq C_t\|f\|_2$. On the other hand, for each $0\leq\ell\leq t-1$, we use Lemma \ref{NS1} to get
	\[ \aligned
\frac{	(1+n)^{t-1}|A^{-\ell+i\beta}_{n-(n_2+t-\ell)}|}{(n_2+t-\ell+1)^{t-\ell}} &\leq \frac{	(1+n)^{t-1}a_0 e^{3 \beta^2}(n-(n_2+t-\ell)+1)^{-\ell}}{(n_2+t-\ell+1)^{t-\ell}} \\
	&\leq c_{t,\ell} e^{3\beta^2}; \endaligned \]
and the sum
	\[ \aligned
(1+n)^{t-1}|\sum_{k=0}^{n_2} A_{n-k}^{-t + i\beta} |  &\leq (1+n)^{t-1}(n_2 + 1) \cdot \max_{0 \leq k \leq n_2} (n-k+1)^{-t} \cdot B_t e^{3\beta^2} \\
&\leq c_t e^{3\beta^2}. \endaligned \]
Combining these two estimates, we have $II\leq c_t e^{3\beta^2},$ which finishes the proof.	
\end{proof}
Now we are in a position to prove Theorem \ref{generalize1}.  Our approach is based on Stein's complex interpolation. For the sake
of self-completeness, we present a detailed proof.  
\begin{proof}[Proof of Theorem \ref{generalize1}.]
 Write $\alpha=\beta+i\gamma$ with $\beta,\gamma\in\R$. Choose $\theta\in(0,1),\; q\in(1,\infty)$, $a\in\Z$
	and $b>0$ such that
	$$\frac{1}{p}=\frac{1-\theta}{2} + \frac{\theta}{q}
	\quad\mbox{and}\quad
	\beta=(1-\theta)a +\theta\,b.$$
	Let $f\in L_p(\mathcal{N})$ and $g=(g_n)$ be a finite sequence in
	$L_{p'}(\mathcal{N})$ such that $\|f\|_p<1$ and $\|g\|_{L_{p'}(\mathcal{N}; \ell_1)}<1$.
	Define
	$$h(z)=u\,|f|^{\frac{p(1-z)}{2}+ \frac{pz}{q}}\, ,
	\quad z\in\C,$$
	where $f=u|f|$ is the polar decomposition of $f$. By construction, for any $t\in \mathbb{R}$, $\|h(it)\|_{2}=\|f\|_p^{\frac{p}{2}}<1$ and $\|h(1+it)\|_{q}=\|f\|_p^{\frac{p}{q}}<1$. By Proposition \ref{JunXu} (iv), there is a function
	$w=(w_n)_n$ continuous on the strip $\{z\in\C: 0\le{\rm
		Re}(z)\le 1\}$ and analytic in the interior such that $w(\theta)=g$
	and
	$$\sup_{t\in\R}\,\max\Big\{\|w(i\,t)\|_{L_{2}(\mathcal{N}; \ell_1)}\ ,\
	\|w(1+i\,t)\|_{L_{q'}(\mathcal{N}; \ell_1)}\Big\}<1.$$
	Now define
	$$F(z)=\exp\big(\delta(z^2-\theta^2))\,\sum_n\varphi\big[M_n^{(1-z)a + zb+i\,\gamma}(h(z))\,
	w_n(z)\big],$$
	where $\delta>0$ is a constant to be specified later. Clearly, $F$ is a function
	analytic in the open strip $\{z\in\C:0<{\rm Re}(z)< 1\}$ (note that $F$ is a finite sum).
	Applying Lemma \ref{NS41} when $a\ge0$ and Lemma \ref{NS14} when $a\le-1$, we have
	\begin{eqnarray*}|F(it)|
		&\le& \exp\big(\delta(-t^2-\theta^2))\,\big\|\big(M_n^{a +i(-ta +t
			b+\gamma)}(h(it))\big)_n\big\|_{L_{2}(\mathcal{N}; \ell_\infty)}
		\,\big\|w(i\,t)\big\|_{L_{2}(\mathcal{N}; \ell_1)}\\
		&\le& C_\alpha\, \exp\big((-\delta+c_{m,\beta, b,\gamma})t^2-\delta\theta^2)\,\|h(it)\|_2
		\\ &\le& C_\alpha\, \exp\big((-\delta+c_{m,\beta, b,\gamma})t^2-\delta\theta^2)\ .
	\end{eqnarray*}
	Similarly, by Lemma \ref{NS41},
	$$|F(1+it)|\le C_{\alpha, q}\, \exp\big((-\delta+c'_{m,\beta, b,\gamma})t^2+\delta(1-\theta^2))\ .$$
	Choosing $\delta>\max(c_{m,\beta, b,\gamma},\; c'_{m,\beta, b,\gamma})$,
	we get
	$$\sup_{t\in\R}\,\max\Big\{|F(i\,t)|\ ,\
	|F(1+i\,t)|\Big\}\le C_{m,\alpha, p}$$
for some constant $C_{m,\alpha, p}>0$.
	Then by the maximum principle,
	$|F(\theta)|\le  C_{m,p,\beta,b,\gamma},$
	which by duality implies
	$$\big|\sum_n\varphi\big[M_n^{\alpha}(f)\,
	w_n\big]\big|\le C_{m,\alpha, p}\ ,$$
as $g=(g_n)$ is an abitrary finite sequence such that $\|g\|_{L_{p'}(\mathcal{N}; \ell_1)}<1$. This proves the first inequality.
	 The second conclusion follows similarly using the estimates involving $N_n^\alpha(f)$ from the previous lemmas. The details are left to readers.
\end{proof}

\section{Proof of Theorem \ref{main1}}
In this section, we prove Theorem \ref{main1}.
\begin{proof}[Proof of Theorem \ref{main1}.] Let $(\M,\tau)$ be a semifinite von Neumann algebra and $\alpha: \Z_{m+1}^d\to \text{Aut}(\mathcal{M},\tau)$ be a $\tau$-preserving action. Then for every $s\in \Z^d_{m+1}$, the map $\alpha_{s}$ extends to a positive isometry on $L_{p}(\M)$ for all $p\geq 1$. It follows from \cite[Proposition 5.3]{HRW} that $(\alpha_{s}\otimes id_{\ell_{\infty}})_{s\in\Z^d_{m+1}}$ extends to a family of isometry on $L_{p}(\M;\ell_{\infty})$ for $1\leq p\le \infty$, still denoted by $(\alpha_{s})_s$. Hence,
for each $s\in \Z^d_{m+1}$, 
$$\big\|(M_kx)_{1\leq k\leq d}\big\|_{L_{p}(\M;\ell_{\infty})}=\big\|(\alpha_{-s}M_kx)_{1\leq k\leq d}\big\|_{L_{p}(\M;\ell_{\infty})},$$
which implies that
\begin{equation}\label{eq:alpha_g An p p}
	\big\|(M_kx)_{1\leq k\leq d}\big\|^p_{L_{p}(\M;\ell_{\infty})}=
	\frac{1}{|\Z^d_{m+1}|}
	\sum_{s\in \Z^d_{m+1}} \big\|(\alpha_{-s}M_kx)_{1\leq k\leq d}\big\|_{L_{p}(\M;\ell_{\infty})}^{p}.
\end{equation}
Given $x$, define an operator-valued function $f\in L_{p}(\Z^d_{m+1}; L_p(\mathcal{M}))$ as
\begin{equation*}
	f(s)=\alpha_{-s}x,\quad s\in \Z^d_{m+1}.
\end{equation*}
Then clearly for each $s\in \Z^d_{m+1}$,
\begin{align}\label{oper}
	\alpha _{-s}M_kx=
\frac{1}{\binom{d}{k}m^k}\sum_{u:|u|=k}\alpha_{u-s}x=\frac{1}{\binom{d}{k}m^k}\sum_{u:|u|=k}f(s-u)=
	T_kf(s).
\end{align}
By the definition of $\bigl\| (T_kf)_{1\leq k\leq d}
	\bigr\| _{L_{p}(L_{\infty }(\Z_{m+1}^{d})
		\overline{\otimes}\M;\ell _{\infty })}$,  there exists a factorization $T_kf=ay_kb$ with
$a,b\in L_{2p}(L_{\infty}(\Z_{m+1}^{d})\overline{\otimes}\M)$ and
$y_k\in L_{\infty }(\Z_{m+1}^{d})\overline{\otimes}\M$ such that
\begin{equation*}
	\| a \|_{2p}
	\sup _{1\leq k\leq d} \| y_k \| _{\infty }
	\| b \| _{2p}\leq \bigl\| (T_k{f})_{1\leq k\leq d}
	\bigr\| _{L_{p}(L_{\infty }(\Z_{m+1}^{d})
		\overline{\otimes}\M;\ell _{\infty })}+\delta .
\end{equation*}
By  H\"{o}lder's inequality, one has
\begin{align*}
&\sum_{s\in\Z_{m+1}^d} \big\|(T_k{f}(s))_{1\leq k\leq d}
	\big\| _{L_{p}(\M;\ell _{\infty })}^{p}  \\ \leq &\sum_{s\in\Z_{m+1}^d} \big\| a(s) \big\| _{2p}^{p}
	\sup _{1\leq k\leq d} \big\| y_k(s) \big\|
	_{
		\infty }^{p} \big\| b(s) \big\|
	_{2p}^{p}
	\\
\leq &\big(\sum_{s\in\Z_{m+1}^d} \big\| a(s) \big\| _{2p}^{2p}\big)^{\frac{1}{2}}
	\sup _{1\leq k\leq d}\sup_{s\in \Z_{m+1}^d} \big\| y_k(s) \big\|
	_{
		\infty }^{p}
\big(\sum_{s\in\Z_{m+1}^d}\big\| b(s) \big\|
	_{2p}^{2p}\big)^{\frac{1}{2}}
	\\
	\leq &\| a \|_{2p}^{p}
	\sup _{1\leq k\leq d} \| y_k \| _{\infty }^{p}
	\|b \| _{2p}^{p}
	\\
	\leq &\big(\| (T_k{f})_{1\leq k\leq d}
	\| _{L_{p}(L_{\infty }(\Z_{m+1}^{d})
	\overline{\otimes}\M;\ell _{\infty })}+\delta
	\big)^{p}.
\end{align*}
By the arbitrariness of $\delta$, we obtain
\begin{align}\label{oper1}
	\sum_{s\in\Z_{m+1}^d} \|(T_k{f}(s))_{1\leq k\leq d}
	\| _{L_{p}(\M;\ell _{\infty })}^{p} \leq \| (T_k{f})_{1\leq k\leq d}
	\| _{L_{p}(L_{\infty }(\Z_{m+1}^{d})
	\overline{\otimes}\M;\ell _{\infty })}^{p}.
\end{align}
Now
using (\ref{eq:alpha_g An p p}), (\ref{oper}) and (\ref{oper1}), we deduce that
\begin{align*}
	\big\|(M_kx)_{1\leq k\leq d}\big\|^p_{L_{p}(\M;\ell_{\infty})} &=
		\frac{1}{|\Z^d_{m+1}|}
	\sum_{s\in \Z^d_{m+1}} \big\|(T_kf(s))_{1\leq k\leq d}\big\|_{L_{p}(\M;\ell_{\infty})}^{p}\\&\leq
	\frac{1}{|\Z^d_{m+1}|}\| (T_kf)_{1\leq k\leq d}
	\| _{L_{p}(L_{\infty }(\Z_{m+1}^{d})
		\overline{\otimes}\M;\ell _{\infty })}^{p}
	\\
	&\le C_{m,p}
	\frac{1}{|\Z^d_{m+1}|} \|f\| _{L_{p}(\Z_{m+1}^{d};L_{p}(\M))}^{p}\\&=C_{m,p}
	\frac{1}{|\Z^d_{m+1}|}\sum_{s\in \Z_{m+1}^{d}}
 \| \alpha _{s}x \|
	_{L_{p}(\M)}^{p}
\\&=C_{m,p}\| x \|
	_{L_{p}(\M)}^{p},
\end{align*}
where the second inequality above uses the strong type $(p,p)$ estimate of $(T_k)_{1\leq k\leq d}$ by Theorem \ref{main2}, and the last equality follows from the fact that $\alpha _{s}$ is isometric on $L_{p}(\M)$ for each $s\in\Z_{m+1}^d$.
This proves the first estimate. 

For the second assertion, let $x\in L_p(\M)$.  Note that for each $k$, $M_k$ is unital and completely positive. Then by Kadison-Schwarz inequality for unital $2$-positive map, we have
$M_k(x^*x)\leq M_k(x)^*M_k(x).$
Now applying the first assertion to $x^*x$, we deduce from Remark \ref{rk:MaxFunct} that there exists positive $b\in L_{p/2}(\M)$ such that
$$\|b\|_{p/2}\leq C_{p,m}\|x^*x\|_{p/2}=C_{p,m}\|x\|_{p}^2\ \mbox{and}\ M_k(x^*x)\leq b,\ \forall k.$$
Hence, $M_k(x)^*M_k(x)\le M_k(x^*x)\leq b$, which implies that for each $k$, there is a $y_k\in\M$ such that $M_k(x)=y_kb^{1/2}$ for all $k$. This gives the desired factorization of $(M_k(x))_k$ as
an element in $L_p(\M;\ell^c_\infty)$ and the proof is finished.
\end{proof}


\begin{remark}{\rm The above transference method actually shows that Theorem \ref{main1} for the fully noncommutative setting and Theorem \ref{main2} for the operator-valued setting are equivalent with identical constants.
}
\end{remark}

We now extend the main result-Theorem \ref{main1} for state preserving action on general von Neumann algebras. This is done by the standard Haagerup reduction method. The readers are referred to \cite{HJX,GYZ,P2,Terp} for more information on this topic. For completeness, we also provide a brief appendix on the basic properties we will use.

 Let $\mathcal{M}$ be a von Neumann algebra equipped with a normal faithful state $\phi$. We denote by $\mathcal{L}_p(\M,\phi)$ the Haagerup $L_p$-space associated to $\phi$ and
$\mathcal{L}_p(\M,\phi;\ell_\infty)$ the $\ell_\infty$-valued $L_p$-spaces. See \cite[Section 7]{HJX} and our appendix.
We say $\alpha: \Z_{m+1}^d\to \text{Aut}(\M)$ is $\phi$-preserving action on $\M$ if i) for each $s$,  $\alpha_s$ is $\phi$-preserving (i.e. $\phi\circ \alpha_s=\phi$) $*$-automorphism on $\M$, ii) for any $s,t\in \Z_{m+1}^d$, $\alpha_s\circ\alpha_t=\alpha_{s+t}$. The main theorem for general von Neumann algebra is stated as follows.
\begin{thm}\label{main3}
Let $\mathcal{M}$ be a von Neumann algebra equipped with a normal faithful state $\phi$.
 Let $\alpha: \Z_{m+1}^d\to \text{Aut}(\M)$ be a $\phi$-preserving action such that for all $s\in \Z_{m+1}^d$:
  $\alpha_s$ commutes with the modular automorphism group $\sigma_{t}^{\phi}$, i.e. $\alpha_s\circ\sigma_{t}^{\phi}=\sigma_{t}^{\phi}\circ \alpha_s$.  
 Then for $p>1$, the spherical mean $M_k(\alpha)$ defined in \eqref{discrete11} satisfies
	$$\|(M_k(\alpha)x)_{1\leq k\leq d}\|_{\mathcal{L}_p(\M,\phi;\ell_\infty)}\leq C_{p,m}\|x\|_{\mathcal{L}_p(\M,\phi)},\ \forall x\in \mathcal{L}_p(\M,\phi);$$
	and for $p>2$,
	$$\|(M_k(\alpha)x)_{1\leq k\leq d}\|_{\mathcal{L}_p(\M,\phi;\ell^c_\infty)}\leq \sqrt{C_{p,m}}\|x\|_{\mathcal{L}_p(\M,\phi)},$$
	where the constant $C_{p,m}$ depends only on $p$ and $m$.
\end{thm}
\begin{proof}[Reduction to the tracial case]The argument is almost identical to \cite[Theorem 7.9]{HJX}. Let $\widehat{M}$ be the crossed product algebra and $(\widehat{M}_n,\tau_n)$ be the sequence of finite tracial subalgebra from the Haagerup reduction.
Here we note that the assumption on commutation with modular group $\sigma^\phi$ is natural, as by \cite[Theorem 4.1]{HJX}, it guarantees an extension  $\widehat{\alpha}: \Z_{m+1}^d\to \text{Aut}(\widehat{\M})$ satisfying the following properties:
\begin{itemize}
\item[i)] $\widehat{\alpha}$ is a $\widehat{\phi}$-preserving action that commutes the modular group  $\sigma^{\hat{\phi}}$:
\[ \widehat{\alpha}_s\circ\sigma^{\hat{\phi}}_t=\sigma^{\hat{\phi}}\circ \widehat{\alpha}_s.\ \]
\item[ii)] For each $n\ge 1$, $\widehat{\alpha}$ is also invariant on $\lambda(G)$ 
\[\widehat{\alpha}_s(\lambda(g))=\lambda(g). \]
This implies that  $\widehat{\alpha}$ commutes with conditional expectation $E_n$,
\begin{align}\widehat{\alpha}_s\circ E_n=E_n\circ \widehat{\alpha}_s .\label{eq:commute}\end{align}
Moreover, the restriction $\widehat{\alpha}|_{\widehat{M}_n}$ is a $\widehat{\phi}$-preserving action
on the subalgebra that also preserves the trace $\tau_n=\frac{\phi_n}{\phi_{n}(1)}$
\[ \phi_n(\cdot )=\widehat{\phi}(\cdot \ e^{a_n} )\ ,\  a_n=i2^n\text{Log}(\lambda(2^{-n})) .\]
\end{itemize}
Now we consider $x$ as an element in $\mathcal{L}_p(\widehat{\M},\widehat{\phi})$ and then apply the conditional expectation $E_n$ to it: $x_n:=E_n(x)\in \mathcal{L}_p(\widehat{\M}_n,\widehat{\phi})$. We can use the tracial case  Theorem \ref{main1} to $\widehat{\alpha}$ on $(\widehat{\M}_n,\tau_n)$ and obtain that for each $n\ge 1$, 
	$$\|(M_k(\widehat{\alpha})x_n)_{1\leq k\leq d}\|_{\mathcal{L}_p(\widehat{\M}_n,\tau_n;\ell_\infty)}\leq C_{p,m}\|x\|_{\mathcal{L}_p(\widehat{\M}_n,\tau_n)}\ \forall n\in \N,$$
	where for $1\leq k\leq d$, $M_k(\widehat{\alpha})$ is the spherecial mean for the extension action $\widehat{\alpha}$
	$$M_k(\widehat{\alpha})=\frac{1}{\binom{d}{k}m^k}\sum_{u:|u|=k}\widehat{\alpha}_u.$$
By the isometry of (maximal) Haagerup $L_p$ space for different states,  we also have
$$\|(M_k(\widehat{\alpha})x_n)_{1\leq k\leq d}\|_{\mathcal{L}_p(\widehat{\M}_n,\widehat{\phi};\ell_\infty)}\leq C_{p,m}\|x\|_{\mathcal{L}_p(\widehat{\M}_n,\widehat{\phi})}. $$
By the martingale convergence theorem (see e.g. \cite[Remark 6.1]{HJX}), one has
	$$\lim_{n\rightarrow\infty}x_n=x\ \ \text{in}\  \mathcal{L}_p(\widehat{\mathcal{M}}, \hat{\phi}).$$
Also, by the commutation relation \eqref{eq:commute}
	$$\widehat{\alpha}_ux_n=\widehat{\alpha}_uE_n(x)=E_n(\widehat{\alpha}_ux).$$
	It then follows that
	$$\lim_{n\rightarrow\infty}\|(M_k(\widehat{\alpha})x_n)_{1\leq k\leq d}\|_{\mathcal{L}_p(\M,\phi;\ell_\infty)}=\|(M_k(\widehat{\alpha})x)_{1\leq k\leq d}\|_{\mathcal{L}_p(\M,\phi;\ell_\infty)}.$$
	Since the constant $C_{p,m}$ is independent of $n$ and $\widehat{\alpha}$ is an extension of $\alpha$, we have for $x\in \mathcal{L}_p(\widehat{\M},\widehat{\phi})$, 
$$\|(M_k(\alpha)x)_{1\leq k\leq d}\|_{\mathcal{L}_p(\M,\phi;\ell_\infty)}\leq C_{p,m}\|x\|_{\mathcal{L}_p(\M,\phi)}.$$
This proves the first part. The second part can be done similarly.
\end{proof}

\section{Examples}
In this section, we give some examples of Theorem \ref{main1} and Theorem \ref{main3}.
\subsection{Quantum Boolean cubes}  The quantum analog of  the classical Boolean cube $\{-1,1\}^n$ is the matrix algebra $M_2(\C)^{\otimes n}\cong M_{2^n}(\C)$. We denote by $\mbox{tr}$ the standard matrix trace on $M_2(\C)^{\otimes n}$ and by $\tau=\frac 1{2^n}\,\mathrm{tr}(\cdot)$ the normalized trace. The normalized Schatten-$p$ norm of $A\in M_2(\C)^{\otimes n}$ is defined by 
\begin{align*}
	\|A\|_p=\tau(|A|^p)^{\frac{1}{p}}
\end{align*}
for $1\leq p<\infty$ and $\|\cdot\|_{\infty}\equiv \|\cdot\|$ is the usual operator norm. Recall that Pauli matrices
\begin{equation*}
	\sigma_0=\begin{pmatrix}1&0\\0&1\end{pmatrix},\quad \sigma_1=\begin{pmatrix}0&1\\1&0\end{pmatrix},\quad \sigma_2=\begin{pmatrix}0&-i\\i&0\end{pmatrix},\quad \sigma_3=\begin{pmatrix}1&0\\0&-1\end{pmatrix}
\end{equation*}
form an orthonormal basis of $M_2(\C)$ w.r.t. the normalized trace inner product. By the relation $\sigma_1\sigma_2=-i\sigma_3$, this introduces an action of $\alpha:\mathbb{Z}_2^3 \curvearrowright M_2(\C)$ generated by the unitary conjugation action that
\begin{align*} &\alpha_{000}(A)= \alpha_{111}(A) =A\ , \  \alpha_{001}(A)=\alpha_{110}(A)=\sigma_1A\sigma_1\ ,\\ \ &\alpha_{010}(A)=\alpha_{101}(A)=\sigma_2A\sigma_2\ ,\ \alpha_{100}(A)=\alpha_{011}(A)=\sigma_3A\sigma_3.\ 
\end{align*}
This induces a map $\phi:\mathbb{Z}_2^3\to  \{0,1,2,3\}$:
\[ 000,111\mapsto 0\ ,\  001,110\mapsto 1\ ,\  010,101\mapsto 2\ ,\  100,011\mapsto 3. \]
For a multi-index ${\bf s}=(s_1,\dots, s_n)\in\{0,1,2,3\}^n$, the product Pauli operators are
\begin{equation*}
	\sigma_{\bf s}:=\sigma_{s_1}\otimes\dots\otimes \sigma_{s_n}.
\end{equation*}
We define the product action $\alpha:\mathbb{Z}_2^{3n} \curvearrowright M_2(\C)^{\otimes n}$ that for ${\bf x}=(x_1,\cdots, x_{3n})\in \mathbb{Z}_2^{3n}$
\[ \alpha_{{\bf x}}(A)=\sigma_{\phi({\bf x})}A\sigma_{\phi({\bf x})}, \]
where with slight abuse of notation, 
\[\phi({\bf x})=\phi(x_1,\cdots,x_{3n})=(\phi(x_1 x_2 x_3), \phi(x_4x_5x_6), \cdots,\phi(x_{3n-2} x_{3n-1}x_{3n}))\in  \{0,1,2,3\}^n.\]
Now consider the spherical mean 
$$M_kA=\frac{1}{\binom{3n}{k}}\sum_{{\bf x}\in\mathbb{Z}_2^{3n}, |{\bf x}|=k}\alpha_{{\bf x}} A,\ A\in M_2(\C)^{\otimes n}.$$
We decompose the summation of $|{\bf x}|=k$ into different partitions. For ${\bf x}=(x_1,\cdots, x_{3n})\in \mathbb{Z}_2^{3n} $ and ${\bf k}=(k_1,\cdots, k_n)\in \{0,1,2,3\}^n$, we write  
\[{\bf x}\prec {\bf k}\ \  \text{ if }\ \  x_{3i+1}+x_{3i+2}+x_{3i+3}=k_i\  \text{ for each } i.\]
Then 
\begin{align*} 
M_kA=&\frac{1}{\binom{3n}{k}}\sum_{{\bf x}\in\mathbb{Z}_2^{3n}, |{\bf x}|=k}\alpha_{{\bf x}} A\\ 
=&\frac{1}{\binom{3n}{k}}\sum_{{\bf k}\in\{0,1,2,3\}^n, |{\bf k}|=k}\left( \sum_{{\bf x}\prec \bf k}\alpha_{{\bf x}} A\right).\\ 
\end{align*}
Observe that for each $\bf k$, 
\begin{align*}
&\sum_{{\bf x}\prec \bf k}\alpha_{{\bf x}} A\\
=& \sum_{|x_1x_2x_3|=k_1}\sum_{|x_4x_5x_6|=k_2}\cdots \sum_{|x_{3n-2}x_{3n-1}x_{3n}|=k_n} \alpha_{x_1x_2x_3}\otimes \alpha_{x_4x_5x_6}\otimes\cdots  \otimes \alpha_{x_{3n-2}x_{3n-1}x_{3n}}(A)\\
=& \left(\sum_{|x_1x_2x_3|=k_1}\alpha_{x_1x_2x_3} \right)\otimes \left(\sum_{|x_4x_5x_6|=k_2}\right)\cdots \otimes\left(\sum_{|x_{3n-2}x_{3n-1}x_{3n}|=k_n} \alpha_{x_{3n-2}x_{3n-1}x_{3n}}\right)(A).
\end{align*}
Note that 
\[ \sum_{|x_1x_2x_3|=1}\alpha_{x_1x_2x_3}(A)=\sum_{|x_1x_2x_3|=2}\alpha_{x_1x_2x_3}(A)=\sum_{s=1}^3\sigma_{s}A\sigma_s=4\tau(A)I-A. \]
On $M_2(\C)^{\otimes n}$, write $$\tilde{E}_i(A):=4\tau_{i}(A)\ten I_i-A,$$ where $\tau_{i}$ (resp. $I_i$) is the normalized partial trace (resp. identity operator) on $i$-th component, and for ${\bf k}\in\{0,1,2,3\}^n$, the product map
$$\tilde{E}_{{\bf k}}:=\bigotimes_{k_i=1 \text{ or } 2}  \tilde{E}_i.$$ Then we can write the spherical mean as
\begin{align*} 
M_kA 
=\frac{1}{\binom{3n}{k}}\sum_{{\bf k}\in\{0,1,2,3\}^n, |{\bf k}|=k} \tilde{E}_{{\bf k}}(A).
\end{align*}
Thus each $M_k$ gives a linear combination of partial traces (marginals) $\{\tau_I(A), I\subset [n]\}$ of $A$, which in general are not commutative on $M_2(\C)^{\otimes n}$. For example,
\begin{align*} 
M_1A=M_{3n-2}A 
=\frac{4}{3}\Big(\frac{1}{n}\sum_{i=1}^n \tau_{i}(A)\ten I_i\Big)- \frac{1}{3}A.
\end{align*}
However,
\begin{align*} 
M_2A=M_{3n-1}A &\\
=\frac{2}{3n(3n-1)}&\Big(\sum_{1\le i,j \le n, i\neq j}  \tilde{E}_i\otimes \tilde{E}_j(A)+\sum_{i=1}^n  \tilde{E}_i(A)\Big)\\
=\frac{2}{3n(3n-1)}&\Big(\sum_{1\le i,j \le n, i\neq j} \big( 16\tau_{i,j}(A)\otimes I_{i,j}- 4\tau_{i}(A)\otimes I_{i}-4\tau_{j}(A)\otimes I_{j}+A\big)\\ &+\sum_{i=1}^n  4\tau_{i}(A)\ten I_i- A \Big)\\
=\frac{2}{3n(3n-1)}&\Big(16 \sum_{1\le i,j \le n, i\neq j} \tau_{i,j}(A)\otimes I_{i,j}-4(n-2) \sum_{1\le i \le n} \tau_{i}(A)\otimes I_{i}\\ &+ (n^2-n-1)A\Big).
\end{align*}

Therefore, by Theorem \ref{main1} and Remark \ref{rk:MaxFunct}, we deduce that for any $A\in M_2(\C)^{\otimes n}_+$ and every $p>1$, there is $B\in M_2(\C)^{\otimes n}_+$ such that for every $1\leq k\leq 3n$,
$$M_kA 
=\frac{1}{\binom{3n}{k}}\sum_{{\bf k}\in\{0,1,2,3\}^n, |{\bf k}|=k} \tilde{E}_{{\bf k}}(A)\leq B\ \text{and}\ \|B\|_p\leq C_{p}\|A\|_p,$$
where $C_{p}$ is the constant depending only on $p$ (independent of $n$).

\subsection{Generalized Pauli Matrices} Let $m\geq1$ and  $\omega=\omega_{m+1}=\exp(\frac{2\pi i}{m+1})$. Define the $(m+1)$-dimensional shift and clock matrices by
$$\ X|j\rangle=|j+1\rangle\ ,\ Z|j\rangle=\omega^j|j\rangle\ , \ \forall j\in\Z_{m+1}, $$
where the addition $j+1$ is in $\Z_{m+1}$ ($\mod{m+1}$). 
Then $Z^{m+1}=X^{m+1}=I$ and $XZ=\omega^{-1} ZX$.
The generalized Pauli matrices (also called Weyl operators) are defined by
\[
W(a,b)=\omega^{-ab/2}X^{a}Z^{b},\qquad a,b\in\Z_{m+1}.
\]
This induces an action $\alpha: \Z_{m+1}^2 \curvearrowright  M_{m+1}(\C)$ defined as 
\[ \alpha_{ab}(A)=W(a,b)A W(a,b)^\dagger\  , \ A\in  M_{m+1}(\C).\]
Recall the conditional expectations
\begin{align*}&E_{X}(A)= \frac{1}{m+1}\sum_{a=0}^m W(a,0)A W(a,0)^\dagger,\\  &E_{Z}(A)=  \frac{1}{m+1}\sum_{b=0}^m W(0,b)A W(0,b)^\dagger,\\  &E_{\C}(A)=\tau(A)I  \end{align*}
are the projections from $M_{m+1}(\C)$ onto the commutative subalgebra generated by $X$, $Z$ and the scalar matrices, respectively. On $M_{m+1}(\C)$, we have the mean operator $k=1$
\begin{align*} M_1(A)=&\frac{1}{2m}(\sum_{a=1}^m \alpha_{a0}A+\sum_{b=1}^m \alpha_{0b}A )=\frac{m+1}{2m}(E_X(A)+E_Z(A))-\frac{1}{m}A\\ &=\frac{1}{2m}(\tilde{E}_X+\tilde{E}_Z)(A),
\end{align*}
where $\tilde{E}_X=\sum_{a}=1^m\alpha_{a0}=(m+1)E_X-\tau(\cdot)I$ and similarly for $\tilde{E}_Z$, and for $k=2$
\begin{align*}
M_2(A)&=\frac{1}{m^2}\sum_{a,b=1}^m \alpha_{ab}A =\tilde{E}_X\tilde{E}_Z(A).
\end{align*}

Now for $\mathbf{a}=(a_1,\dots ,a_n)$ and $\mathbf{b}=(b_1,\dots ,b_n)\in\Z_{m+1}^{n}$, define
\[
W(\mathbf{a},\mathbf{b})=\bigotimes_{j=1}^{n}W(a_j,b_j).
\]
Consider the product action $\alpha: \Z_{m+1}^{2n} \curvearrowright M_{m+1}(\C)^{\otimes n}$,
\[ \alpha_{\mathbf{a}\mathbf{b}}(A)=W(\mathbf{a},\mathbf{b})A W(\mathbf{a},\mathbf{b})^{\dagger}.\]
Following similar calculations from the previous subsection, we have
\begin{align*} 
M_k^{\alpha} A=&\frac{1}{\binom{2n}{k}}\sum_{ ({\bf a}{\bf b})\in\mathbb{Z}_{m+1}^{2n}, |({\bf a}{\bf b})|=k}\alpha_{{\bf a}{\bf b}} A\\ 
=&\frac{1}{\binom{2n}{k}}\sum_{{\bf k}\in\{0,1,2\}^n, |{\bf k}|=k} \Big(\bigotimes_{i \text{ s.t. }k_i=1}(\tilde{E}_{X_i}+\tilde{E}_{Z_i} )\otimes \bigotimes_{i \text{ s.t. } k_i=2}\tilde{E}_{X_i}\tilde{E}_{Z_i}   \Big)(A),
\end{align*}
where the map $\tilde{E}_{X_i}$ (resp. $\tilde{E}_{Z_i}$) are copies of $\tilde{E}_{X}$ (resp. $\tilde{E}_{Z}$) on the $i$-th component of $M_{m+1}(\C)^{\otimes n}$. It then follows from Theorem \ref{main1} and Remark \ref{rk:MaxFunct} that for any $A\in M_{m+1}(\C)^{\otimes n}_+$ and every $p>1$, there is $B\in M_{m+1}(\C)^{\otimes n}_+$ such that for every $1\leq k\leq 2n$,
$$M_k^{\alpha} A\leq B\ \text{and}\ \|B\|_p\leq C_{p,m}\|A\|_p,$$
where $C_{p,m}$ is the constant depending only on $p$ and $m$ (independent of $n$).

Alternatively, we can consider the sub-action only by $Z$ operators
\[\beta:  \Z_{m+1}^{n} \curvearrowright  M_{m+1}(\C)^{\otimes n}\ ,\  \beta_{{\bf b}}(A)=\alpha_{{\bf 0b}}(A).\]
In this case, the spherical mean is 
\begin{align*} 
M_k^\beta A=&\frac{1}{\binom{n}{k}}\sum_{ {\bf b}\in\mathbb{Z}_{m+1}^{n}, |{\bf b})|=k}\beta_{{\bf b}} A\\ 
=&\frac{1}{\binom{n}{k}}\sum_{J\subset [n]} \Big(\bigotimes_{j \in I }\tilde{E}_{Z_j} \Big)(A).
\end{align*}
Therefore, by Theorem \ref{main1} and Remark \ref{rk:MaxFunct}, for any $A\in M_{m+1}(\C)^{\otimes n}_+$ and every $p>1$, there is $B\in M_{m+1}(\C)^{\otimes n}_+$ such that for every $1\leq k\leq 2n$,
$$M_k^\beta A=\frac{1}{\binom{n}{k}}\sum_{J\subset [n]} \Big(\bigotimes_{j \in I }\tilde{E}_{Z_j} \Big)(A)\leq B\ \text{and}\ \|B\|_p\leq C_{p,m}\|A\|_p.$$

\subsection{Quantum Tori} Let $d\geq2$ and $\theta=(\theta_{kj})$ be a real skew symmetric $d\times d$-matrix.  The associated $d$-dimensional noncommutative torus, denoted by $\mathcal{A}_\theta^d$,  is the universal C$^*$-algebra generated by
$d$ unitary operators $U_1,\cdot\cdot\cdot,U_d$ satisfying
$$U_kU_j=e^{2\pi i\theta_{kj}}U_jU_k,\ j,k=1,\cdot\cdot\cdot,d.$$
Let $U=(U_1,\cdot\cdot\cdot,U_d)$. For $n=(n_1,\cdot\cdot\cdot,n_d)$, write
$$U^n=U_1^{n_1}\cdot\cdot\cdot U_d^{n_d}.$$
The $*$-algebra $\mathcal{P}_\theta$ consists of all finite linear combinations
$$x=\sum_{n\in\Z^d}a_nU^n,\ \text{with}\ a_n\in\C.$$
Recall that $\mathcal{P}_\theta$
is dense in $\mathcal{A}_\theta^d$. The linear functional $x\mapsto a_0$ on $\mathcal{P}_\theta$ extends to a faithful tracial state $\tau$ on $\mathcal{A}_\theta^d$.
The $d$-dimensional quantum tori $\mathbb{T}_\theta^d$ is defined as the w$^*$-closure of $\mathcal{A}_\theta^d$ in the  GNS representation
of $\tau$, where $\tau$ extends to a normal faithful tracial state on $\mathbb{T}_\theta^d$, still denoted by $\tau$. We write $L_p(\mathbb{T}_\theta^d)$ for the noncommutative $L_p$-spaces associated to the pair $(\mathbb{T}_\theta^d,\tau)$.

Let $m \ge 1$ and set $\omega = \exp\bigl(\frac{2\pi i}{m+1}\bigr)$.  We consider the action $\alpha : \mathbb Z_{m+1}^d \curvearrowright \mathbb{T}_\theta^d$ defined on the generators by
\[
\alpha_z(U^n) = \omega^{z\cdot n }\,U^n,\ z = (z_1,\dots ,z_d) \in \mathbb Z_{m+1}^d,
\]
where $z\cdot n=\sum_{i=1}^d z_in_i$.
For $1 \le k \le d$ define the spherical mean
\[
M_k x = \frac{1}{\binom{d}{k}\,m^{\,k}}
\sum_{\substack{z \in \mathbb Z_{m+1}^d \\ |z| = k}} \alpha_z(x), \ x \in \mathbb{T}_\theta^d,
\]
where $|z|$ denotes the number of coordinates of $z$ different from $0$.
A direct computation on the monomials $U^n$ gives
\begin{align*}
	M_k U^n
	&= \frac{1}{\binom{d}{k}}
	\sum_{\substack{J\subseteq[d]\\|J|=k}}
	\Bigl( \frac{1}{m^k} \sum_{\substack{z:\operatorname{supp}z=J}} \alpha_z(U^n) \Bigr) \\[2mm]
	&= \frac{1}{\binom{d}{k}}
	\sum_{\substack{J\subseteq[d]\\|J|=k}}
	 \prod_{i\in J} \Bigl(\frac{1}{m}\sum_{\ell=1}^{m} \omega^{\ell n_i}
	 \Bigr) U^n \\[2mm]
	&= \frac{1}{\binom{d}{k}}
	\sum_{\substack{J\subseteq[d]\\|J|=k}}
	 \prod_{i\in J} \beta(n_i)  U^n:=\frac{1}{\binom{d}{k}}
	\sum_{\substack{J\subseteq[d]\\|J|=k}}
	\beta_{J}(n)  U^n,
\end{align*}
where
\begin{align*}
&\beta(n_i) = \frac{1}{m}\sum_{\ell=1}^{m} \omega^{\ell n_i}
= \frac{1}{m}\sum_{\ell=1}^{m} e^{2\pi i \ell n_i/(m+1)}=\begin{cases}
1 & \text{if } m+1 \mid n_i , \\
-\frac{1}{m} & \text{if } m+1 \nmid n_i,
\end{cases}\\
&\beta_J(n)=\prod_{i\in J}\beta(n_i).
\end{align*}
Namely, $M_k$ is a Fourier multiplier 
\[ M_k(\sum_{n\in \mathbb{Z}^d} a_n U^n)= \sum_{n\in \mathbb{Z}^d} a_n \beta_{k}(n)U^n \]
with the multiplier function \[\beta_{k}(n)=\frac{1}{\binom{d}{k}}
	\sum_{\substack{J\subseteq[d]\\|J|=k}}
	\beta_{J}(n).\]
Write $l(n)$ as the number of $n_i$ in $n=(n_1,\cdots, n_d)$ such that $m+1 \nmid n_i$. Then
\[\beta_{k}(n)=\frac{1}{\binom{d}{k}}\sum_{1\le l\le \min\{k, l(n)\}}  \binom{d-l(n)}{k-l}\binom{l(n)}{l} (-\frac{1}{m})^{l}.\]
For example, if $m+1$ is a prime number,  $l(n)$ is the number of $n_i$'s which are multiple of $m+1$. As a consequence, we apply Theorem~\ref{main1} to obtain that for every $p>1$ and any $x\in L_p^{+}(\mathbb{T}_\theta^d)$, there exists a constant $C_{p,m}$ independent of the dimension $d$ and a positive operator  $a\in L_p^{+}(\mathbb{T}_\theta^d)$ such that
\[
M_kx\le a,\quad \forall 1\le k\le d\ \text{and}\ \|a\|_p\leq C_{p,m}\|x\|_p.
\]
\subsection{Hyperfinite III$_\lambda$ factor}

Let $0<\lambda<1$. Consider hyperfinite III$_\lambda$ factor defined as
\[ \mathcal{R}_\lambda:=\bigotimes_{i=1}^\infty (M_2(\C),\varphi_\lambda).\]
Here, for each $n\ge 1$, we have the natural embedding
\[ \bigotimes_{i=1}^d (M_2(\C),\varphi_\lambda)\subset \bigotimes_{i=1}^{n+1} (M_2(\C),\varphi_\lambda), x\mapsto x\otimes I_2 \]
and the finite tensor product $$\mathcal{R}_\lambda=\overline{\bigcup_{n=1}^\infty \bigotimes_{i=1}^n (M_2(\C),\varphi_\lambda)^{w^*}}$$ is interpreted as the $w^*$-closure of all finite tensor product acting on GNS representation of product state $\phi_\lambda:=\otimes_{i=1}^\infty \varphi_\lambda$, which 
$\phi_\lambda$ is a normal faithful state on $\mathcal{R}_\lambda$.   

On $(M_2(\C),\varphi_\lambda)$, we have a $\varphi_\lambda$-preserving $\mathbb{Z}_2$ action by the Pauli $Z$ matrix,
\[\alpha(A)=ZAZ, \   Z=\left[\begin{array}{cc} 1 &0\\ 0& -1
\end{array}\right].\]
Moreover, it commutes with the modular group $\alpha\sigma^{\varphi_\lambda}=\sigma^{\varphi_\lambda}\alpha$. 
For each $i\ge 1$, this induces a $\mathbb{Z}_2$-action on the $i$-th component
\[ \alpha_i(A)=Z_iAZ_i,  A\in \mathcal{R}_\lambda\]
where $Z_i=I\otimes \cdots\otimes Z\otimes  \cdots\otimes I$ is the Pauli $Z$ matrix on the $i$th component. Then with slight abuse of notation, for each $d\ge 1$, we have a product action $\alpha:\mathbb{Z}_2^d\curvearrowright \mathcal{R}_\lambda$ that for $u=(u_1,\cdots, u_d)\in \mathbb{Z}_2^d$, 
\begin{align*}\alpha_{u}(A)=\prod_{u_i=1}\alpha_i(A).
\end{align*}
It follows that $\alpha$ is $\phi_\lambda$-preserving and commutes with its modular group. For a subset $I\subset [d]$, write $\alpha_{I}=\prod_{i\in I}\alpha_i$.
The spherical mean operator is
\[ M_k=\frac{1}{\binom{d}{k}}\sum_{I\subset [d]}\alpha_i.\]
Then by Theorem \ref{main3}, we deduce that for every $p>1$, any $A\in \mathcal{L}^+_p(\mathcal{R}_{\lambda}, \phi_\lambda)$ , there is $B\in \mathcal{L}^+_p(\mathcal{R}_{\lambda}, \phi_\lambda)$ such that for every $1\leq k\leq d$,
$$M_k(A) =\frac{1}{\binom{d}{k}}\sum_{I\subset [d]}\alpha_i(A)\le B\ \text{and}\ \|B\|_{\mathcal{L}_p(\mathcal{R}_{\lambda}, \phi_\lambda)}\leq C_{p}\|A\|_{\mathcal{L}_p(\mathcal{R}_{\lambda}, \phi_\lambda)}.$$
We note that the constant $C_p$ only depends on $p$ (independent of $d$) but the definition of $M_k$ depends on  $d$.


\appendix
\section{Haagerup's $L_p$-space}
In this appendix, we briefly review Haagerup noncommutative $L_p$-space and Haagerup reduction method. We refer to \cite{Terp,P2,HJX,GYZ} for more details on these topics.
\subsection{Haagerup noncommutative $L_p$-spaces}
 Let $\M\subset B(H)$ be a von Neumann algerba equipped with a distinguished normal faithful state $\phi$. For example, we can identify $\M\cong \pi_\phi(\M)$ via the GNS representation $\{\pi_\phi,H_\phi,\eta_\phi\}$ that
\[
\phi(x)=\langle \eta_\phi,\pi_\phi(x)\eta_\phi\rangle,\ x\in \M .
\]
The modular automorphism group $(\sigma_t^\phi)_{t\in\R}$ is given by
\[
\sigma_t^\phi:\M\to \M\  ,\ \sigma_t^\phi(x)=\Delta^{it}x\Delta^{-it},\qquad x\in \M ,
\]
where $x$ is identified with $\pi_\phi(x)$, $\Delta=S^*\overline S$ is the relative modular operator and the (closed) Tomita operator $\overline S$ is defined by
\[
S\big(\pi_\phi(x)\eta_\phi\big)=\pi_\phi(x^\ast)\eta_\phi.\qquad
\]

The crossed product
\[\mathcal{R}=\M\rtimes_{\sigma^\phi} \mathbb{R}\]
is the von Neumann algebra acting on $L_2(\mathbb{R},H)$, generated by the operators $\pi(x), x\in \M$, and $\la(s),s\in \mathbb{R}$, defined as follows: for all $\xi\in L_2(\mathbb{R},H)$ and $t\in \R$
\[
\pi(x)(\xi)(t)= \sigma_{-t}^{\varphi}(x)\xi(t),
\qquad
\la(s)(\xi)(t)=\xi(t-s).
\]
Note that $\pi$ is a normal faithful representation of $\M$ on $H\ten_2 L_2(\mathbb{R})\cong L_2(\mathbb{R},H)$ and $(\pi,\mathcal{L})$ gives a covariant representation such that
$$\pi(\sigma_t(x))=\mathcal{L}(t)\pi(x)\mathcal{L}(t)^*, x\in \M , t\in \mathbb{R}.$$
The dual action $\hat{\sigma}^\phi$ of $\mathbb{R}$ on $\mathcal{R}$ is  implemented by the unitary $W(t)$
\[\hat{\sigma}^\phi_t(x)=W(t)xW(t)^*\ , \ (W(t)\xi)(s)=e^{-ist}\xi(s)\ , \ x\in\mathcal{R}, \xi\in L_2(\mathbb{R},H),s,t\in\R. \]
One has
\[\hat{\sigma}^\phi_t(x)=x, \quad \hat{\sigma}^\phi_t(\mathcal{L}(s))=e^{-ist}\mathcal{L}(s), \ x\in \M, \ s,t\in \mathbb{R},\]
and $\M=\{x\in \mathcal{R}\  | \ \hat{\sigma}^\phi_t(x)=x\ ,  \forall t\in \mathbb{R}\}$ is exactly the invariant subspace under $\hat{\sigma}^\phi_t$. For any normal positive linear functional $\psi\in \M_*^+$, there is a dual weight $\widehat{\psi}$ on $\mathcal{R}$ determined by 
$$\widehat{\psi}(x)=\psi\Big[\int_{\R}\hat{\sigma}^\phi_t(x)dt\Big].$$

Recall that $\mathcal{R}$ is semifinite and admits a normal semifinite faithful trace $\tau$ satisfying
\[ \tau\circ \hat{\sigma}^\phi_t=e^{-t}\tau, \quad \forall t\in \mathbb{R}.\]
Then there exists a Radon-Nikodym derivative $D_\psi$ with respect to $\tau$ such that
\[\widehat{\psi}(x)=\tau(D_\psi^{\frac{1}{2}} x D_\psi^{\frac{1}{2}}), \quad x\in \mathcal{R}_+\ , \quad \hat{\sigma}^\phi_t(D_\psi)=e^{-t}D_\psi\ .\]
In particular, the Radon-Nikodym derivative $D_\phi$ of the dual weight $\widehat{\phi}$ of our distinguished n.s.f. weight $\phi$ is called the density operator of $\phi$.

Let $ L_0(\mathcal{R},\tau)$ denote the topological $*$-algebra of all measurable operators associated to $(\mathcal{R},\tau)$.
For $0<p\le \infty$, the Haagerup noncommutative $L_p$-space associated to $(\M,\phi)$ is then defined as
\[\mathcal{L}_p(\M,\phi)=\{ x\in L_0(\mathcal{R},\tau): \hat{\sigma}^\phi_t(x)=e^{-t/p}x, \ \forall t\in \mathbb{R}\}.\]
Recall that
$$\mathcal{L}_\infty(\M,\phi)=\M\ \text{and}\ \mathcal{L}_1(\M,\phi)=\M_*.$$
The latter equality are understood as follows: as mentioned above, there is a linear bijection
\[\psi\in \M_*^+ \longleftrightarrow D_\psi\in\mathcal{L}^+_1(\M,\phi).\]
This bijection further extends an identification
$\psi\in \M_* \leftrightarrow D_\psi\in \mathcal{L}_1(\M,\phi)$
with the module property $D_{x  \psi  y}=xD_{ \psi }y, x,y\in \M$.
Moreover, if $\psi=u|\psi|$ is its polar decomposition, then $D_\psi=u|D_\psi|=uD_{|\psi|}$. Thus using this linear bijection, the trace and $L_1$-norm on $\mathcal{L}_1(\M,\phi)$ is defined as
\[ \tr(D_\psi):=\psi(1)\ , \
\norm{D_\psi}{\mathcal{L}_1(\M,\phi)}:=\tr(|D_\psi|)=\tr(D_{|\psi|})=|\psi|(1)=\norm{\psi}{\M_*}.
\]
For $a\in L_0(\mathcal{R},\tau)$, we have the polar decomposition $a=u|a|$ and for $p\in [1,\infty)$
\[a\in \mathcal{L}_p(\M,\phi)\Longleftrightarrow |a|\in \mathcal{L}_p(\M,\phi)\Longleftrightarrow |a|^{p}\in \mathcal{L}_1(\M,\phi),\]
which leads to the $L_p$-norm of $a$ defined as
\[ \norm{a}{\mathcal{L}_p(\M,\phi)}=\tr(|a|^p)^{1/p}\ , \qquad \norm{a}{\infty}=\norm{a}{\M}.
\]
The trace ``$\tr$'' has the following tracial property:
\[\tr(ab)=\tr(ba),\]
given $a\in \mathcal{L}_p(\M,\phi),b\in \mathcal{L}_q(\M,\phi)$ with $1/p+1/q=1$, $ab,ba\in \mathcal{L}_1(\M)$. 
We collect the following facts about Haagerup noncommutative $L_p$-spaces.

	\begin{itemize}
	\item[(i)] If $\mathcal{M}$ is a semifinite von Neumann algebra equipped with a normal faithful trace $\tau$, then the Haagerup space $\mathcal{L}_p(\mathcal{M}, \tau)$ is isometrically isomorphic to the tracial $L_p$-space $L_p(\mathcal{M},\tau)$ introduced in Subsection~\ref{tracial}; see \cite[Chapter II]{Terp1}. 
    Indeed, in this case  $$\mathcal{R} = \mathcal{M} \overline{\otimes} L_\infty(\mathbb{R})\ ,\ D_\tau = \id \otimes \exp(\cdot)\ ,\  \hat{\sigma}^\tau_t = \id \otimes T_t,$$ where $T_t f(\cdot) = f(\cdot - t)$ is the shifting on $\mathbb{R}$. Any $x \in \mathcal{L}_p(\mathcal{M}, \tau)$ can be written as $x = x' \otimes \exp(\cdot/p)$ with $x' \in L_p(\mathcal{M},\tau)$, and moreover, $$\| x \|_{\mathcal{L}_p(\mathcal{M}, \tau)} = \| x \|_{L_p(\mathcal{M}, \tau)}:=\tau(|x'|^p)^\frac{1}{p}.$$	
	\item[(ii)] The Haagerup noncommutative $L_p$-spaces $\mathcal{L}_p(\M,\phi)$ is independent of the choice $\phi$ up to isometry. Namely, for two normal faithful states $\phi$ and $\psi$, the crossed product $\mathcal{R}_{\phi}:=\M\rtimes_{\sigma^\phi} \mathbb{R}$ and $\mathcal{R}_{\psi}\M\rtimes_{\sigma^\psi}\mathbb{R}$ are isomorphic by the Connes cocycle unitary $u_t=(D_{\phi}:D_{\psi})_t$ defined by the property
    \[ u_{s+t}=u_s \sigma^{\psi}(u_t). \]
This induces an isometry between the $L_p$ spaces $\mathcal{L}_p(\M,\phi)$ and $\mathcal{L}_p(\M,\psi)$. See  \cite[Chapter II]{Terp1}  and also \cite{GYZ} for details.
    \item[(iii)] Let $\mathcal{M}$ be a semifinite von Neumann algebra equipped with a normal faithful trace $\tau$, and let $\phi$ be a normal faithful state on $\mathcal{M}$. Then $\phi(x) = \tau(\sigma x)$ for $x \in \mathcal{M}$ for some positive operator  $\sigma\in L_1(\mathcal{M})$ with $\tau(\sigma) = 1$. The Kosaki's $L_p$-space $L_{p,\sigma}(\mathcal{M})$ is defined as the completion of $\mathcal{M}$ with respect to the norm
	\[
	\| x \|_{p,\sigma} = \tau( |\sigma^{\frac{1}{2p}} x \sigma^{\frac{1}{2p}} |^{p})^{\frac{1}{p}}=\norm{\sigma^{\frac{1}{2p}} x \sigma^{\frac{1}{2p}}}{p}.
	\]
	As noted in \cite[Proposition 2.4]{GYZ}, $L_{p,\sigma}(\mathcal{M})$ is isometric to Haagerup $L_p$-space that for $x\in \mathcal{M}$,
	\[
	 \| x \|_{p,\sigma}=\| D_\phi^{1/(2p)} x D_\phi^{1/(2p)} \|_{\mathcal{L}_p(\mathcal{M}, \phi)}.
	\]

\end{itemize}	 
Thanks to the module property $\mathcal{M}\mathcal{L}_p(\mathcal{M}, \phi)\mathcal{M}=\mathcal{L}_p(\mathcal{M}, \phi)$, the maximal $\mathcal{L}_p(\mathcal{M}, \phi; \ell_\infty)$ can be naturally defined as follows:
for a sequence $(x_n)$ in $\mathcal{L}_p(\mathcal{M}, \phi)$
$$\|(x_n)_{n}\|_{\mathcal{L}_p(\mathcal{M}, \phi; \ell_\infty)}=\inf\left\{\big\|a\big\|_{\mathcal{L}_{2p}(\mathcal{M}, \phi)}\sup_{n\in I}\big\|y_n\big\|_\infty\big\|b\big\|_{\mathcal{L}_{2p}(\mathcal{M}, \phi)}\right\},$$
where the infimum is taken over all possible factorizations $(x_n)_n=(ay_nb)_n$. We also denote the norm as $\|{\sup^+_{n}}x_n\|_{\mathcal{L}_p(\mathcal{M}, \phi)}$ or shortly $\|{\sup^+_{n}}x_n\|_{p, \phi}$. 
The column maximal $L_p$ space $\mathcal{L}_p(\mathcal{M}, \phi; \ell_\infty^c)$ is defined similarly.

By the above fact ii), it is clear from the definition that $\mathcal{L}_p(\mathcal{M}, \phi; \ell_\infty)$ for different states are isometric via Connes cocyle unitary.

	\subsection{Haagerup's reduction method}
	The idea of Haagerup's reduction method is to approximate a type III von Neumann algebra by finite ones.
	Let $\M$ be a von Neumann algebra equipped with a normal faithful state $\phi$. Let
	\[
	G=\bigcup_{n\in\N}2^{-n}\Z \subset \R ,
	\] be the discrete subgroups of dyadic numbers.
	Consider the crossed product for the action $\sigma^\phi:G\curvearrowright \M$
	\[
	\widehat{\M}=\M\rtimes_{\sigma^\phi} G \subset \M\ \overline\otimes\ B \!\big(\ell^2(G)\big)
	\]
	is the von Neumann subalgebra of $\M\otimes B(\ell_2(G))$ generated by the embeddings
	\begin{align*}
		&\pi:\M\to \M\otimes B(\ell_2(G)),\qquad
		\pi(a)=\sum_{g\in G}\sigma_{g^{-1}}(a)\otimes e_{g,g},\\
		&\lambda:G\to \M\otimes B(\ell_2(G)),\qquad
		\lambda(g)\big(a\otimes e_{h,h'}\big)=a\otimes e_{gh_1,h_2},\ \ \ x\in \M,\ h,h'\	\end{align*}
	where $e_{h,h'}$ represents the matrix unit in $B(\ell_2(G))$.
	Finite sums $\sum_g a_g \lambda(g)$ (with $a_g\in \M$) form a weak$^\ast$-dense $^\ast$-subalgebra of $\widehat{\M}$, and we identify $\M$ with $\pi(\M)\subset \widehat{\M}$.
	The state $\phi$ extends to a normal faithful state on $\widehat{\M}$ via
	\[
	\widehat{\phi} \Big(\sum_g a_g \lambda(g)\Big)=\phi(a_0),
	\]
	and the canonical $\phi$-preserving normal  conditional expectation $E_{\M}:\widehat{\M}\to \M$ is
	\[
	E_{\M} \Big(\sum_g a_g  \lambda(g)\Big)=a_0,\qquad \phi\circ E_{\M}=\widehat{\phi}.
	\]
	It follows that $\mathcal{L}_p(\M,\phi)\subset \mathcal{L}_p(\widehat{\M},\hat{\phi})$ isometrically as a subspace.
	The main object in Haagerup’s construction is an increasing family of centralizer subalgebras
	\[
	\widehat{\M}_n=\mathcal{M}^{\phi_n}:=\{x\in \widehat{\M}\mid \sigma_t^{\phi_n}(x)=x,\ \forall\,t\in\R\},
	\]
	associated with the state $\phi_n$ defined via a Radon–Nikod\'{y}m density w.r.t.~$\widehat{\phi}$:
	\[
	\phi_n(x)=\widehat{\phi}(e^{-a_n}x),\qquad a_n=-i\,2^{\,n}\,\text{Log} \big(\lambda(2^{-n})\big),
	\]
	where $\text{Log}$ is the principal branch with $0\le \text{Im}(\text{Log}(z))<2\pi$. Each $\widehat{\mathcal{M}}_n$ contains $\lambda(G)$ and admits a normal conditional expectation $E_{\M_n}:\widehat{\M}\to \widehat{\M}_n$ preserving $\widehat{\phi}$. Indeed, by the definition of $\phi_n$, the modular group $\sigma_t^{\phi_n}$ is $2^{-n}$ periodic. The explicit form (see \cite[Lemma 2.4]{HJX}) is given by
	\[E_{\widehat{\M}_n}=2^{n}\int_0^{2^{-n}}\sigma_t^{\phi_n} dt.\]
	
	Let $\tau_n=\phi_n/\phi_n(1)$ be the normalized tracial state on $\widehat{\mathcal{M}}_n$. The Haagerup's reduction theorem \cite[Theorem 2.1 \& 3.1]{HJX} is stated as follows.
	\begin{itemize}
		\item[i)] $(\widehat{\M}_n,\tau_n)$ is an increasing family of finite von Neumann algebra 
		such that $\bigcup_{n\ge 1}\widehat{\M}_n$ is weak$^\ast$-dense in $\widehat{\M}$;
        \item[ii)] For $n\ge 1$, there exists a normal $\hat{\phi}$-preserving conditional expectation $E_{n}:\widehat{\mathcal{M}}\to \widehat{\M}_n$
		satisfying  $ E_{\M}E_{n}=E_{\M}=E_{n}$;
		\item[iii)] $\mathcal{L}_p(\widehat{\mathcal{M}}_n, \hat{\phi}) \subset \mathcal{L}_p(\widehat{\mathcal{M}}, \hat{\phi})$ as subspaces and $\bigcup_{n\ge 1}\mathcal{L}_p(\widehat{\mathcal{M}}_n, \hat{\phi})$ is norm dense in $\mathcal{L}_p(\widehat{\mathcal{M}}, \hat{\phi})$. In particular, for any $x\in \mathcal{L}_p(\M,\phi)\subset \mathcal{L}_p(\widehat{\mathcal{M}}, \hat{\phi})$,
		$$\lim_{n\to \infty} E_n(x)=x \text{ in } \mathcal{L}_p(\widehat{\mathcal{M}}, \hat{\phi}).$$
	\end{itemize}
	Here $\mathcal{L}_p(\M,\phi)$ represents the Haargeup noncommutative $L_p$-space associted to $\phi$.

    It is proved in \cite[Section 7]{HJX} that the reduction extends to the maximal $L_p$-space $\mathcal{L}_p(\M,\phi;\ell_\infty)$ that 
    \begin{itemize}
    \item[i)] $\mathcal{L}_{p}(\M,\phi;\ell_\infty)$ and $\mathcal{L}_{p}(\widehat{\M}_n,\widehat{\phi};\ell_\infty)$ embed into $\mathcal{L}_{p}(\widehat{\M},\widehat{\phi};\ell_\infty)$ as a subspace isometrically \cite[Corollary 7.4]{HJX}.
    \item[ii)]  for a sequence $(x_i)_i\subset \mathcal{L}_{p}(\M,\phi)\subset \mathcal{L}_{p}(\widehat{\M},\widehat{\phi})$, 
    \[\lim_{n\to \infty}\norm{(E_n(x_i))_i}{\mathcal{L}_{p}(\widehat{\M}_n,\widehat{\phi};\ell_\infty)}=\norm{(x_i)_i}{\mathcal{L}_{p}(\M,\phi;\ell_\infty)}.\]
\end{itemize}

\medskip

\noindent {\bf Acknowledgement}. L. Gao~is partially supported by the National Natural Science Fundation of China (NSFC grant No.~12401163), and by the Department of Science and Technology of Hubei Province (Project No. 2025EHA041 and No. 2025AFA044). B. Xu is supported by National Natural Science Foundation of China (Grant No. 12501172), and Fundamental Research Funds for the Central Universities (Grant No. 20720250061).


\begin{thebibliography}{0}

\bibitem{Al11} J. M. Aldaz, The weak type $(1,1)$ bounds for the maximal function associated to cubes
grow to infinity with the dimension,  Ann. of Math. (2)  \textbf{173} (2011) 1013-1023.

\bibitem{Bou86} J. Bourgain, On high dimensional maximal functions associated to convex bodies,  Amer. J. Math.  \textbf{108} (1986) 1467-1476.

\bibitem{Bou14} J. Bourgain, On the Hardy-Littlewood maximal function for the cube, Israel J. Math.   \textbf{203} (2014) 275-293.

\bibitem{Car86} A. Carbery, An almost orthogonality principle with applications to maximal functions
associated to convex bodies,  Bull. Amer. Math. Soc. (N.S.)   \textbf{14} (1986) 269-273.

\bibitem{CH} C. Chen, G. Hong, Noncommutative spherical maximal inequality associated with automorphisms,  arXiv:2410.06035
        
        
        
        
        
        
        
        
        
        
        
        
        
        
        
        
        
        
        
        
        
        
        
        
        
        
        
        
        
        
        
        
        
        
        
        
        
        
        
        
        
        
        
        
        
        
        
        
        
        
        
        
        
        
        
        
        
        
        
        
        
        
        
        
        
        
        
        .

\bibitem{CHW} C. Chen, G. Hong, L. Wang, A noncommutative maximal inequality for ergodic averages along arithmetic sets,  arXiv:2408.04374
        
        
        
        
        
        
        
        
        
        
        
        
        
        
        
        
        
        
        
        
        
        
        
        .

\bibitem{FK}T. Fack, H. Kosaki, Generalized s-numbers of $\tau$-measurable operators, Pacific J. Math. \textbf{123} (1986) 269-300.

\bibitem{HKS} A. W. Harrow, A.  Kolla, L. J.  Schulman, Dimension-free $L_2$  maximal inequality for spherical means in the hypercube,  Theory Comput. \textbf{10} (2014) 55-75.



\bibitem{HJX} U. Haagerup, M. Junge, Q. Xu, A reduction method for noncommutative $L_{p}$-spaces and applications, Trans. Amer. Math. Soc. \textbf{362} (2010) 2125-2165.


\bibitem{Hong} G. Hong, The behavior of the bounds of matrix-valued maximal inequality in $\R^n$ for large $n$,  Illinois J. Math. \textbf{57} (2013) 855-869.

\bibitem{HLX} G. Hong, X. Lai, B, Xu, Maximal singular integral operators acting on  noncommutative $L_p$-spaces, Math. Ann. \textbf{386} (2023) 375–414.

\bibitem{HLW} G. Hong, B. Liao, S. Wang, Noncommutative maximal ergodic inequalities associated with
doubling conditions, Duke Math. J. \textbf{170} (2021) 205-246.

\bibitem{HRW} G. Hong, S. Ray, S. Wang, Maximal ergodic inequalities for some positive operators on noncommutative $L_p$-spaces,  J. Lond. Math. Soc. (2) \textbf{108} (2023) 362-418.



\bibitem{GKK} J.  Greenblatt, A. Kolla, B.  Krause, Dimension-free $L^p$-maximal inequalities for spherical means in $\Z^d_{m+1}$,  Int. Math. Res. Not. IMRN (2021)  6586-6620.
\bibitem{GYZ} J. Gu, Z. Yin, H. Zhang, Interpolation of quasi noncommutative $L_p$-spaces,  arXiv:1905.08491.

\bibitem{Lan76} E. C. Lance,  Ergodic theorems for convex sets and operator algebras,   Invent. Math. \textbf{37} (1976)  201-214.

\bibitem{J1} M. Junge, Doob's inequality for non-commutative martingales, J. Reine Angew. Math. \textbf{549} (2002) 149-190.

\bibitem{JX} M. Junge, Q. Xu, Noncommutative maximal ergodic theorems, J. Amer. Math. Soc. \textbf{20} (2007) 385-439.

\bibitem{K1} B. Krause, Dimension-free $L^p$-maximal inequalities for spherical means in the hypercube,   SIAM J. Discrete Math. \textbf{32} (2018)  2493-2511.

\bibitem{LLLW} W. Li, W. Li, J. Liu, L. Wu, Noncommutative Stein's maximal spherical means,   arXiv:2310.14150.

\bibitem{M} T. Mei, Operator valued Hardy spaces, Mem. Amer. Math. Soc. \textbf{188} (2007) vi+64 pp.

\bibitem{Mu90}
D. M\"uller, A geometric bound for maximal functions associated to convex bodies, Pacific J. Math. \textbf{142} (1990) 297-312.

\bibitem{NS}
A. Nevo, E. M. Stein, A generalization of Birkhoff's pointwise ergodic theorem, Acta Math. \textbf{173} (1994) 135–154.
\bibitem{NW25} J. Niksi\'nski, B. Wr\'obel, Dimension-free estimates for discrete maximal functions and lattice points in high-dimensional spheres and balls with small radii, arXiv:2503.16952.

\bibitem{P2}G. Pisier, Q. Xu, Noncommutative $L^p$ spaces, Handbook of geometry of Banach spaces (2003) 1459-1517.

\bibitem{St1}
E. M. Stein, The development of square functions in the work of A. Zygmund,  Bull. Amer. Math. Soc. (N.S.)  \textbf{7} (1982) 359–376.


\bibitem{SS}
E. M. Stein, J. O. Str\"omberg,  Behavior of maximal functions in $R^n$ for large $n$,  Ark. Math.  \textbf{21} (1983) 259–269.
\bibitem{Terp1} M. Terp, $L_p$ spaces associated with von Neumann algebras, Notes, Math. Institute, Copenhagen Univ. (1981).
\bibitem{Terp} M. Terp, Interpolation spaces between a von Neumann algebra and its predual, J. Operator Theory, \textbf{8} (1982)  327-360.

\bibitem{Y1} F. J. Yeadon, Ergodic theorems for semifinite von Neumann algebras. I, J. London Math.
Soc. (2) \textbf{16} (1977) 326-332.




\end{thebibliography}
\end{document}